\newcolumntype{P}[1]{>{\RaggedRight\arraybackslash}p{#1}}
  \def\Phi{Phi}%
  \def\omega{omega}%
  \def\lambda{lambda}%
\theoremstyle{plain}
\newtheorem{theorem}{Theorem}[section]
\newtheorem{lemma}[theorem]{Lemma}
\newtheorem{proposition}[theorem]{Proposition}
\theoremstyle{definition}
\newtheorem{axiom}[theorem]{Axiom}
\newtheorem{definition}[theorem]{Definition}
\theoremstyle{remark}
\newtheorem{example}[theorem]{Example}
\newtheorem{counterexample}[theorem]{Counterexample}
\newcommand{\X}{\mathcal{X}}
\newcommand{\N}{\mathbb{N}}
\newcommand{\R}{\mathbb{R}}
\newcommand{\one}{\mathbf{1}}
\newcommand{\inner}[2]{\left\langle #1,\,#2\right\rangle}
\newcommand{\norm}[1]{\left\lVert #1 \right\rVert}
\newcommand{\scopemicro}[2]{%
\begin{table}[H]\centering\footnotesize
\begin{tabular}{@{}P{0.46\textwidth}P{0.46\textwidth}@{}}\toprule
\textbf{Holds when} & \textbf{Not claimed when}\\\midrule
#1 & #2\\\bottomrule
\end{tabular}
\end{table}
}
\title{\vspace{-1em}%
The \texorpdfstring{$\Phi$}{Phi}-Process: Operator--Algebraic Embeddings of Possibilities,\\
Transfinite Stabilization, and a Quantitative Application to Sensory Depletion}
\author[1]{Faruk Alpay}
\author[2]{Bugra Kilictas}
\affil[1]{Lightcap, Department of Bias\\ \texttt{alpay@lightcap.ai}}
\affil[2]{Bahcesehir University, Department of Computer Engineering\\ \texttt{bugra.kilictas@bahcesehir.edu.tr}}
\date{August 2025}
\begin{document}
\maketitle

\begin{abstract}
We formalize a transfinite \(\Phi\)-process that treats \emph{all-possibility embeddings} as operators on structured state spaces (complete lattices, Banach/Hilbert spaces, orthomodular lattices). Iteration indices run from \(\Phi^{(0)}\) to a transfinite limit \(\Phi^{(\infty)}\) obtained as the first fixed point in the ordinal iteration. Core results include: (i) a determinization lemma (\emph{Flip--Flop Determinization}) showing that if the state space is lifted to sets (or distributions) of possibilities, the global \(\Phi\)-dynamics is deterministic; (ii) an \emph{Alpay Ordinal Stabilization Theorem} for operator transforms that converge by stage \(\omega\) to a spectral projection; and (iii) an \emph{Alpay Product-of-Riesz Projections Theorem} identifying \(\Phi^{(\infty)}\) with a commuting product of fixed-point projections. We add full proofs in \S3, instantiate the orthomodular track with a concrete example, give a probabilistic determinization toy kernel, extend nonnormal/noncommuting analysis, derive strengthened quantitative lemmas in \S5 with complete proofs, include parameter-mapping tables, per-theorem micro scope tables, and a small appendix with reproducible code. Canonical anchors include Tarski fixed points, powerset determinization, and Riesz projections \citep{tarski1955,rabin_scott1959,hopcroft_ullman1979,kato1995,dunford_schwartz1958}; medical grounding follows \citep{garciamesa_2021,bronselaer2013}.
\end{abstract}

\section{Axioms and Definitions}
\label{sec:axioms-defs}

\begin{axiom}[Structured state spaces]\label{ax:spaces}
All processes act on a state space $\,\X\,$ endowed with one of the following structures:
\begin{enumerate}[label=(\alph*)]
\item a complete lattice $(\X,\leq)$;
\item a complete metric space $(\X,d)$;
\item a Hilbert space $H$ (or uniformly convex Banach space);
\item an orthomodular lattice $\mathcal{L}(H)$ of projections.
\end{enumerate}
\end{axiom}

\begin{definition}[The \(\Phi\)-operator and its iterates]\label{def:phi}
A \emph{\(\Phi\)-operator} is a self-map $\Phi:\X\to\X$. Define the transfinite iteration by
\[
\Phi^{(0)}(x)=x,\quad \Phi^{(\alpha+1)}(x)=\Phi(\Phi^{(\alpha)}(x)),\quad
\Phi^{(\lambda)}(x)=\lim_{\alpha\uparrow\lambda}\Phi^{(\alpha)}(x)
\]
for limit ordinals $\lambda$, where the limit is taken in the ambient structure of Axiom~\ref{ax:spaces} (order, metric, or strong topology).
We write $\Phi^{(\infty)}(x)$ for the first ordinal stage at which $\Phi^{(\alpha)}(x)=\Phi^{(\alpha+1)}(x)$.
\end{definition}

\begin{definition}[All-possibility embedding]\label{def:embedding}
A \emph{possibility embedding} of a base space $\X$ is a lifting $\widehat{\X}$ (e.g., $\widehat{\X}=2^\X$ or the space of probability measures $\mathcal{P}(\X)$) together with a deterministic lift $\widehat{\Phi}:\widehat{\X}\to\widehat{\X}$ defined by
\[
\widehat{\Phi}(S) \;=\; \bigcup_{x\in S}\Phi(x)\quad \text{for }S\subseteq \X,
\]
or by push-forward of measures in the probabilistic case. Intuitively, $\widehat{\Phi}$ advances \emph{all} next-step possibilities in one deterministic update.
\end{definition}

\begin{definition}[\(\Phi\)-packing and \(\Phi^{(\infty)}\)]\label{def:packing}
A \emph{\(\Phi\)-packing} is a countable (or ordinal-indexed) product/composition of embeddings $(\Phi_k)_{k\in I}$ producing
\(
\Phi_{\mathrm{pack}}=\cdots\circ \Phi_3\circ \Phi_2\circ \Phi_1.
\)
When the iterates stabilize, $\Phi^{(\infty)}$ denotes the canonical fixed object (terminal packaged state).
\end{definition}

\paragraph{Orthomodular instantiation (concrete).}\label{para:oml}
Let $H$ be a Hilbert space and $\mathcal{L}(H)$ the orthomodular lattice of orthogonal projections with partial order $P\le Q\iff \mathrm{Ran}(P)\subseteq\mathrm{Ran}(Q)$, lattice join $P\vee Q$ and meet $P\wedge Q$. Fix a unitary $V$ and a projection $Q$. Define
\[
\Phi_{\mathrm{oml}}(P)\ :=\ P\ \vee\ \big(VPV^\ast\wedge Q\big),\qquad P\in\mathcal{L}(H).
\]
Then $\Phi_{\mathrm{oml}}$ is monotone on $\mathcal{L}(H)$ and the ordinal iteration stabilizes at the least projection $P^\star$ satisfying $P^\star\ge P_0$ and $P^\star\ge V P^\star V^\ast\wedge Q$ (Knaster–Tarski on the complete lattice of projections ordered by $\le$).

\section{Foundational Lemmas and Determinization}
\label{sec:determinize}

\paragraph{Canonical anchor.}
Least fixed points for monotone self-maps on complete lattices follow from \citet{tarski1955}. Classical powerset determinization of nondeterministic automata is standard \citep{rabin_scott1959,hopcroft_ullman1979}.

\begin{lemma}[Flip--Flop Determinization]\label{lem:determinize}
Let $\Phi:\X\to 2^{\X}$ map each state to its set of possible successors. Define $\widehat{\Phi}:2^{\X}\to 2^{\X}$ by $\widehat{\Phi}(S)=\bigcup_{x\in S}\Phi(x)$. Then:
\begin{enumerate}[label=(\roman*)]
\item $\widehat{\Phi}$ is deterministic and monotone on the complete lattice $(2^{\X},\subseteq)$.
\item The increasing chain $\{x_0\}\subseteq \widehat{\Phi}(\{x_0\})\subseteq \widehat{\Phi}^2(\{x_0\})\subseteq\cdots$ converges to the least fixed point
\(
L=\bigcup_{n\ge 0}\widehat{\Phi}^n(\{x_0\}).
\)
\item An observer constrained to a single path $x_0\to x_1\to\cdots$ (with $x_{k+1}\in\Phi(x_k)$) may experience randomness; the global lifted process is deterministic.
\end{enumerate}
\end{lemma}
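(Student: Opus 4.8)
The plan is to dispatch (i) by a direct structural computation, (ii) by the Kleene ascent for a Scott-continuous monotone operator, and (iii) by contrasting the single-valuedness of $\widehat{\Phi}$ with the multivaluedness of $\Phi$. For (i), I would first recall that $(2^{\X},\subseteq)$ is a complete lattice whose joins are unions and whose meets are intersections, so suprema of arbitrary families exist. \emph{Determinism} means $\widehat{\Phi}$ is single-valued as a map $2^{\X}\to2^{\X}$, which is immediate since $\widehat{\Phi}(S)=\bigcup_{x\in S}\Phi(x)$ is a uniquely determined subset for each $S$. For monotonicity I would prove the stronger fact that $\widehat{\Phi}$ preserves arbitrary unions, $\widehat{\Phi}(\bigcup_i S_i)=\bigcup_i\widehat{\Phi}(S_i)$, by interchanging the two index unions; monotonicity is then the special case $S\subseteq T\Rightarrow\widehat{\Phi}(S)\subseteq\widehat{\Phi}(T)$. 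This complete-join preservation is precisely the Scott-continuity I reuse below.

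For (ii), I would run the Kleene iteration from $\{x_0\}$. The first---and only delicate---point is that the displayed chain is genuinely increasing: its base case $\{x_0\}\subseteq\widehat{\Phi}(\{x_0\})$ amounts to $x_0\in\Phi(x_0)$, i.e.\ a reflexive successor relation (self-loops), or equivalently a passage to the cumulative operator $S\mapsto S\cup\widehat{\Phi}(S)$ whose iterates coincide with the stated reachable sets. Granting this base case, monotonicity from (i) propagates it inductively to $\widehat{\Phi}^{n}(\{x_0\})\subseteq\widehat{\Phi}^{n+1}(\{x_0\})$, so the family is a chain and its union $L$ exists. That $L$ is a fixed point follows from complete-join preservation, $\widehat{\Phi}(L)=\bigcup_{n}\widehat{\Phi}^{n+1}(\{x_0\})=L$, where the base case reabsorbs the $n=0$ term; minimality is the usual prefixed-point argument---if $\{x_0\}\subseteq F$ and $\widehat{\Phi}(F)\subseteq F$ then $\widehat{\Phi}^{n}(\{x_0\})\subseteq F$ for all $n$, whence $L\subseteq F$---which is the Knaster--Tarski/Kleene characterization.

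For (iii), I would make the informal dichotomy precise. A single-path observer realizes a selection $x_{k+1}=s(x_k)$ with $s(x_k)\in\Phi(x_k)$; whenever $\Phi(x)$ has more than one element the next state is not a function of the current one, so distinct admissible selections yield distinct trajectories---the sense in which the observer ``experiences randomness.'' By contrast $\widehat{\Phi}$ is single-valued by (i), so the lifted orbit $\{x_0\},\widehat{\Phi}(\{x_0\}),\dots$ is fixed independently of any choice. The one formal ingredient here is the containment $x_k\in\widehat{\Phi}^{k}(\{x_0\})$ for every admissible path, proved by induction on $k$ via $x_{k+1}\in\Phi(x_k)\subseteq\widehat{\Phi}\big(\widehat{\Phi}^{k}(\{x_0\})\big)$; thus every local trajectory is a slice of the single deterministic orbit, and all nondeterminism is confined to the choice of representative inside a set that itself evolves deterministically---the Rabin--Scott powerset determinization.

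I expect the sole genuine obstacle to be the base case of (ii): as literally defined $\widehat{\Phi}$ need not satisfy $\{x_0\}\subseteq\widehat{\Phi}(\{x_0\})$, and indeed $\emptyset$ is always a fixed point, so the unrestricted least fixed point of $\widehat{\Phi}$ is trivial. I would therefore state convergence for the inflationary operator $S\mapsto S\cup\widehat{\Phi}(S)$ (or assume reflexive successors), for which the chain increases by construction and $L$ is honestly the least fixed point above $\{x_0\}$; every remaining step is routine.
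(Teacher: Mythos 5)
Your proposal follows essentially the same route as the paper --- single-valuedness of the lift, union-preservation giving monotonicity (indeed Scott continuity), and the Kleene/Knaster--Tarski ascent from $\{x_0\}$ --- just spelled out in full where the paper compresses everything into two sentences. The one place you genuinely go beyond the paper is the base case of (ii), and you are right to flag it: as literally defined, $\{x_0\}\subseteq\widehat{\Phi}(\{x_0\})$ requires $x_0\in\Phi(x_0)$, and without that the displayed chain need not be increasing and $L=\bigcup_{n\ge 0}\widehat{\Phi}^n(\{x_0\})$ need not be a fixed point at all, since union-preservation only yields $\widehat{\Phi}(L)=\bigcup_{n\ge 1}\widehat{\Phi}^n(\{x_0\})$, which can fail to contain $x_0$; moreover $\emptyset$ is always a fixed point of $\widehat{\Phi}$, so the unqualified ``least fixed point'' is trivial. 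The paper's proof silently treats the chain as increasing and speaks of ``the least fixed point above $\{x_0\}$'' without addressing either point. Your repair --- passing to the inflationary operator $S\mapsto S\cup\widehat{\Phi}(S)$, or assuming reflexive successors --- is the standard and correct fix, and with it every step of your argument, including the path-containment induction $x_k\in\widehat{\Phi}^k(\{x_0\})$ that makes (iii) precise, goes through.
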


\begin{proof}
Monotonicity is immediate; Tarski's theorem gives existence of least fixed points. The union $\bigcup_{n\ge 0}\widehat{\Phi}^n(\{x_0\})$ is the least fixed point above $\{x_0\}$. Item (iii) formalizes the local/global perspective split.
\end{proof}

\scopemicro{Complete lattice; monotone lift to $2^\X$ or $\mathcal{P}(\X)$; Tarski applies.}{Global determinism is not claimed if one forbids any lifting that enumerates branches.}

\begin{theorem}[Compositionality of lifted maps]\label{thm:compositionality}
Let $\Phi,\Psi:\X\to 2^{\X}$ be set-valued maps and let $\widehat{\Phi},\widehat{\Psi}:2^{\X}\to 2^{\X}$ be their lifts $\widehat{\Phi}(S)=\bigcup_{x\in S}\Phi(x)$, $\widehat{\Psi}(S)=\bigcup_{x\in S}\Psi(x)$. Then
\[
\widehat{\Psi\circ \Phi}\;=\;\widehat{\Psi}\circ \widehat{\Phi},
\]
and $\widehat{\Psi\circ \Phi}$ is monotone on $(2^{\X},\subseteq)$. The same identity holds for probabilistic lifts via push-forward.
\end{theorem}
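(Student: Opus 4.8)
The plan is to prove the compositionality identity by a direct set-theoretic computation, unwinding the definition of the lift on both sides and showing the resulting unions agree. First I would fix an arbitrary $S\subseteq\X$ and compute the left-hand side: by definition of the composite set-valued map, $(\Psi\circ\Phi)(x)=\bigcup_{y\in\Phi(x)}\Psi(y)$, so that
\[
\widehat{\Psi\circ\Phi}(S)=\bigcup_{x\in S}\;\bigcup_{y\in\Phi(x)}\Psi(y).
\]
Then I would compute the right-hand side by applying $\widehat{\Phi}$ first and $\widehat{\Psi}$ second:
\[
\bigl(\widehat{\Psi}\circ\widehat{\Phi}\bigr)(S)=\widehat{\Psi}\Bigl(\bigcup_{x\in S}\Phi(x)\Bigr)=\bigcup_{y\in\bigcup_{x\in S}\Phi(x)}\Psi(y).
\]
The two expressions coincide because membership $y\in\bigcup_{x\in S}\Phi(x)$ is by definition equivalent to the existence of some $x\in S$ with $y\in\Phi(x)$, which reindexes the single union over the big union into the iterated double union. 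This is the commutativity of arbitrary unions with reindexing, and it holds set-theoretically with no hypotheses on $\X$.

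Next I would dispatch the monotonicity claim. Since $\widehat{\Phi}$ and $\widehat{\Psi}$ are each monotone on $(\Pow,\subseteq)$ by Lemma~\ref{lem:determinize}(i), their composite is monotone as well: if $S\subseteq T$ then $\widehat{\Phi}(S)\subseteq\widehat{\Phi}(T)$, and applying $\widehat{\Psi}$ preserves this inclusion, so $\widehat{\Psi}(\widehat{\Phi}(S))\subseteq\widehat{\Psi}(\widehat{\Phi}(T))$. Invoking the identity just established, $\widehat{\Psi\circ\Phi}$ is therefore monotone. No separate argument is needed.

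For the probabilistic case I would replace unions with push-forwards and note that the claim reduces to functoriality of push-forward, namely $(g\circ f)_\ast\mu=g_\ast(f_\ast\mu)$ for measurable $f,g$ and a measure $\mu$. Concretely, for a Markov kernel one verifies on test sets $A$ that $((\Psi\circ\Phi)_\ast\mu)(A)=\int K_\Psi(y,A)\,(\Phi_\ast\mu)(dy)$ equals the iterated push-forward, which is the Chapman–Kolmogorov composition of kernels; this is again a reindexing identity, now expressed through Fubini/Tonelli for the composite kernel integral.

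I do not expect a genuine obstacle here: the statement is essentially the observation that the lift $\Phi\mapsto\widehat\Phi$ is a functor from the Kleisli category of the powerset (respectively, distribution) monad to the category of sets, and functoriality is exactly preservation of composition. The only point requiring mild care is the measurability bookkeeping in the probabilistic case — ensuring the kernels compose to a measurable kernel so that the push-forward is well defined and Tonelli applies — but this is routine under the standard standing assumption that all maps and kernels in sight are measurable.
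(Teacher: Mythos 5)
Your proposal is correct and follows essentially the same route as the paper: a direct unwinding of the lifted unions, reindexing the double union via the Kleisli composition $(\Psi\circ\Phi)(x)=\bigcup_{y\in\Phi(x)}\Psi(y)$, with monotonicity from composing monotone maps and the probabilistic case reduced to push-forward functoriality and kernel composition. No gaps to report.
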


\begin{proof}
For $S\subseteq\X$,
$(\widehat{\Psi}\circ \widehat{\Phi})(S)=\bigcup_{y\in \widehat{\Phi}(S)}\Psi(y)
=\bigcup_{x\in S}\bigcup_{y\in \Phi(x)}\Psi(y)
=\bigcup_{x\in S}(\Psi\circ \Phi)(x)=\widehat{\Psi\circ \Phi}(S)$.
Monotonicity follows from union-monotonicity.
\end{proof}

\begin{proposition}[Measurable/probabilistic compositionality]\label{prop:measurable-composition}
Let $(X,\Sigma_X),(Y,\Sigma_Y),(Z,\Sigma_Z)$ be standard Borel spaces.
\begin{enumerate}[label=(\alph*)]
\item If $\Phi:X\to Y$ and $\Psi:Y\to Z$ are Borel maps and lifts act on probability measures by push-forward, then $(\Psi\circ\Phi)_\#\mu=\Psi_\#(\Phi_\#\mu)$ for every probability measure $\mu$ on $X$.
\item If $\Phi,\Psi$ are Markov kernels $K_\Phi:X\rightsquigarrow Y$, $K_\Psi:Y\rightsquigarrow Z$ (measurable in the first argument), define $\widehat\Phi(\mu)=\mu K_\Phi$. Then $\widehat{\Psi\circ\Phi}=\widehat\Psi\circ\widehat\Phi$ with kernel composition $(K_\Psi K_\Phi)(x,C)=\int_Y K_\Psi(y,C)\,K_\Phi(x,dy)$.
\end{enumerate}
For non-Polish measurable spaces, assume countably generated $\sigma$-algebras and universally measurable kernels to retain (b).
\end{proposition}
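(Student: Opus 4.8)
The plan is to treat the two claims separately: part (a) reduces to a one-line preimage computation expressing functoriality of push-forward, while part (b) is the associativity of the measure--kernel--kernel product, which I would establish via the monotone-class ``standard machine'' together with a Fubini-type interchange. The only genuinely technical point is verifying that the composite kernel in (b) is again a Markov kernel, i.e.\ measurable in its first argument; everything else is bookkeeping.

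For part (a) I would start from the definition $(\Phi_\#\mu)(B)=\mu(\Phi^{-1}(B))$ for $B\in\Sigma_Y$, which is well-defined exactly because $\Phi$ is Borel, so $\Phi^{-1}(B)\in\Sigma_X$. Applying this twice and using the elementary set identity $(\Psi\circ\Phi)^{-1}=\Phi^{-1}\circ\Psi^{-1}$ gives, for every $C\in\Sigma_Z$,
\[
\Psi_\#(\Phi_\#\mu)(C)=(\Phi_\#\mu)(\Psi^{-1}(C))=\mu\big(\Phi^{-1}(\Psi^{-1}(C))\big)=\mu\big((\Psi\circ\Phi)^{-1}(C)\big)=(\Psi\circ\Phi)_\#\mu(C).
\]
Since $C$ is arbitrary the two measures agree. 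I note that this uses only measurability of the maps, not the full standard-Borel structure; that hypothesis is kept merely for uniformity with the rest of the section.

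For part (b) I would proceed in two steps. First, verify that $(K_\Psi K_\Phi)(x,C)=\int_Y K_\Psi(y,C)\,K_\Phi(x,dy)$ is a Markov kernel $X\rightsquigarrow Z$: for fixed $x$ it is a probability measure in $C$ by monotone convergence applied to the countably additive integrand $C\mapsto K_\Psi(y,C)$, and for fixed $C$ it is measurable in $x$ because $y\mapsto K_\Psi(y,C)$ is bounded measurable and, by the standard machine, $x\mapsto\int_Y f(y)\,K_\Phi(x,dy)$ is measurable for every bounded measurable $f$ (check indicators, extend by linearity to simple functions, then pass to bounded $f$ by dominated convergence). Second, unfolding $\widehat\Phi(\mu)=\mu K_\Phi$, I would compute $(\widehat\Psi\circ\widehat\Phi)(\mu)=(\mu K_\Phi)K_\Psi$ and apply the interchange $\int_Y f\,d(\mu K_\Phi)=\int_X\!\big(\int_Y f(y)\,K_\Phi(x,dy)\big)\mu(dx)$ with $f(\cdot)=K_\Psi(\cdot,C)$, obtaining $\int_X (K_\Psi K_\Phi)(x,C)\,\mu(dx)=\mu(K_\Psi K_\Phi)(C)=\widehat{\Psi\circ\Phi}(\mu)(C)$, as required.

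The main obstacle is the measurability and integration-order bookkeeping in the first step of (b): one must know that the inner integral against $K_\Phi$ preserves measurability in $x$ and that the two orders of integration coincide, both resting on the monotone class theorem and requiring the kernels to be genuinely measurable in their first argument. On standard Borel spaces this is automatic. For the non-Polish extension flagged in the statement, the same scheme survives once we assume countably generated $\sigma$-algebras (so the monotone class argument has a countable generating system) and universally measurable kernels (so that preimages and inner integrals stay measurable even when Borel measurability fails), which is precisely the hypothesis added in the closing sentence of the proposition.
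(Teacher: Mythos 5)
Your argument is correct. Note that the paper itself gives no proof of this proposition --- it is followed only by a scope table whose ``Holds when'' column reads ``Fubini/Tonelli applicable,'' so there is nothing to compare against beyond that hint; your write-up supplies exactly the standard argument that hint points to. Part (a) is the functoriality of push-forward via $(\Psi\circ\Phi)^{-1}(C)=\Phi^{-1}(\Psi^{-1}(C))$, and part (b) is the measure--kernel associativity $(\mu K_\Phi)K_\Psi=\mu(K_\Psi K_\Phi)$, with the only real work being the monotone-class verification that $x\mapsto\int_Y f(y)\,K_\Phi(x,dy)$ is measurable for bounded measurable $f$ and the Tonelli-type interchange for the nonnegative integrand $K_\Psi(\cdot,C)$. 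One cosmetic point: in extending from simple functions to bounded measurable $f$, uniform approximation by simple functions (or monotone convergence on $f^\pm$) is the cleaner closing step than dominated convergence, though either works. Your observation that (a) needs only measurability, not the standard-Borel hypothesis, is also accurate and worth retaining.
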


\scopemicro{Standard Borel spaces; Borel maps or Markov kernels; Fubini/Tonelli applicable.}{Non-countably generated $\sigma$-algebras; kernel measurability failures.}

\begin{example}[Probabilistic determinization (toy kernel)]
Let $X=\{a,b\}$ and define a Markov kernel $K$ by $K(a,\{b\})=1$, $K(b,\{a\})=p$, $K(b,\{b\})=1-p$ for $p\in(0,1)$. On the simplex of measures \(\mathcal{P}(X)=\{(\mu_a,\mu_b):\mu_a+\mu_b=1\}\), the lifted map is linear and deterministic:
\[
\widehat{\Phi}(\mu_a,\mu_b)=\big(\,p\,\mu_b,\ 1-p\,\mu_b\,\big).
\]
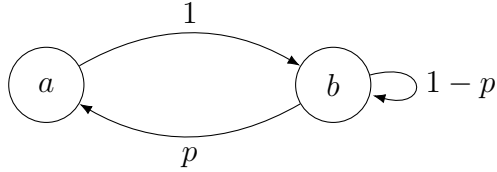
\begin{figure}[H]\centering
\begin{tikzpicture}[node distance=2.8cm,>=Latex]
\node (a) [circle,draw,minimum size=1cm] {$a$};
\node (b) [circle,draw,minimum size=1cm,right=of a] {$b$};
\draw[->] (a) to[bend left] node[above] {$1$} (b);
\draw[->] (b) to[bend left] node[below] {$p$} (a);
\draw[->] (b) edge[loop right] node {$1-p$} (b);
\end{tikzpicture}
\caption{Toy kernel: lifted evolution on \(\mathcal{P}(X)\) is deterministic and affine.}
\end{figure}
\end{example}

\section{Operator Theorems: Transfinite Stabilization and Spectral Projections}
\label{sec:operator}

\begin{axiom}[Logical contraction / event-indexed contraction]\label{ax:logical-contraction}
On a complete metric space $(\X,d)$, a map $T$ is \emph{logically contractive} if there exists an increasing sequence $(n_k)$ and factors $\lambda_k\in(0,1)$ with
\(
d\big(T^{n_k}x,T^{n_k}y\big)\le \lambda_k\, d(x,y)
\)
for all $x,y$, and $\prod_k \lambda_k=0$.
\end{axiom}

\begin{theorem}[Alpay Logical Contraction Fixed Point]\label{thm:logical-contraction}
If $T$ is logically contractive on a complete metric space, then $T$ has a unique fixed point $x^\ast$ and $T^n x\to x^\ast$ for all $x$.
\end{theorem}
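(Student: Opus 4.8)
The plan is to reduce the statement to the classical Banach fixed point theorem by extracting a single genuine contraction from the hypothesis, and then to promote the resulting fixed point and its basin of attraction from a sparse subsequence of iterates to the full orbit. I would begin with uniqueness, which is the cheapest part: if $Tx^\ast=x^\ast$ and $Ty^\ast=y^\ast$ then also $T^{n_1}x^\ast=x^\ast$ and $T^{n_1}y^\ast=y^\ast$, so the contraction inequality at the first index gives $d(x^\ast,y^\ast)\le\lambda_1\,d(x^\ast,y^\ast)$ with $\lambda_1\in(0,1)$, forcing $d(x^\ast,y^\ast)=0$.

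For existence, I would observe that the hypothesis already delivers a bona fide contraction at the first index: since $\lambda_1\in(0,1)$, the map $S:=T^{n_1}$ satisfies $d(Sx,Sy)\le\lambda_1\,d(x,y)$, so by the Banach fixed point theorem on the complete metric space $(\X,d)$ it has a unique fixed point $x^\ast$ with $S^m x\to x^\ast$ for every $x$. To see that $x^\ast$ is fixed by $T$ itself, I would use that powers of $T$ commute: from $S(Tx^\ast)=T(Sx^\ast)=Tx^\ast$ we learn that $Tx^\ast$ is also a fixed point of $S$, and uniqueness of the latter forces $Tx^\ast=x^\ast$. Combined with the uniqueness argument above, $x^\ast$ is then the unique fixed point of $T$.

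The remaining and genuinely delicate step is to upgrade $S^m x\to x^\ast$ (convergence along the arithmetic subsequence of multiples of $n_1$) to $T^n x\to x^\ast$ along the full sequence, without assuming that $T$ is itself continuous or non-expansive between the contraction times. Here I would write each $n$ by Euclidean division as $n=q\,n_1+r$ with $0\le r<n_1$, and exploit the factorization $T^n x=S^q(T^r x)$ together with $x^\ast=S^q x^\ast$ to obtain $d(T^n x,x^\ast)\le\lambda_1^{\,q}\,d(T^r x,x^\ast)$. Since $r$ ranges over the finite set $\{0,1,\dots,n_1-1\}$, the quantity $C:=\max_{0\le r<n_1} d(T^r x,x^\ast)$ is finite, whence $d(T^n x,x^\ast)\le\lambda_1^{\,q}C\to0$ as $n\to\infty$. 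This is the step I expect to be the main obstacle, precisely because the contraction is only assumed along the sparse index set $(n_k)$ and nothing is assumed about $T$ off that set; the division trick sidesteps any need for continuity of $T$ by keeping the contracting power $S^q$ on the outside.

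Finally, I would remark that the product hypothesis $\prod_k\lambda_k=0$ is not actually needed for this route—one contraction at $k=1$ already suffices—but it can be used to make the picture uniform and to justify the name: composing the first $k$ blocks and using additivity of exponents, $T^{\,n_1+\cdots+n_k}$ is Lipschitz with constant $\prod_{j\le k}\lambda_j\to0$, so the effective contraction factors along this composed subsequence genuinely tend to zero even in regimes where no individual $\lambda_k$ is small. I would therefore present the division argument as the backbone of the proof and mention the composed-block estimate as the conceptual content of the product condition.
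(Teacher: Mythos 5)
Your proof is correct and complete under Axiom~\ref{ax:logical-contraction} as literally stated, and it takes a genuinely different route from the paper's. The paper argues directly that the full orbit $(T^nx_0)$ is Cauchy by pairing iterates across the event indices $(n_k)$, invoking the product condition $\prod_k\lambda_k=0$ and ``nonexpansivity of the finite tail $T^{m-n_k}$'' --- a property that is nowhere among the hypotheses --- and its index bookkeeping ($T^{m-n_k}x_{n_k-(n-n_k)}$ equals $x_{m+n_k-n}$, not $x_n$, unless $m=2n-n_k$) does not in general produce the pair $(x_m,x_n)$. You instead notice that the axiom already makes $S:=T^{n_1}$ a bona fide Banach contraction, since the inequality $d(T^{n_k}x,T^{n_k}y)\le\lambda_k d(x,y)$ is required for \emph{all} $x,y$ with $\lambda_k\in(0,1)$; the classical ``some power of $T$ is a contraction'' argument then applies: Banach's theorem gives the unique fixed point $x^\ast$ of $S$, the commutation $S(Tx^\ast)=T(Sx^\ast)=Tx^\ast$ transfers fixedness to $T$, and Euclidean division $n=qn_1+r$ with the finite constant $C=\max_{0\le r<n_1}d(T^rx,x^\ast)$ gives $d(T^nx,x^\ast)\le\lambda_1^qC\to0$, upgrading subsequence convergence to the full orbit without any continuity or nonexpansivity of $T$ off the event set. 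Your route is more elementary, avoids the unjustified tail estimate, and correctly exposes that $\prod_k\lambda_k=0$ is superfluous for the theorem as stated; your closing observation that the composed blocks $T^{n_1+\cdots+n_k}$ carry Lipschitz constant $\prod_{j\le k}\lambda_j$ is exactly what the product condition would buy under the weaker reading in which contraction is only assumed between consecutive event times --- there one would instead take $S=T^N$ for a partial sum $N$ with $\prod_{j\le k}\lambda_j<1$ and run your argument verbatim.
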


\begin{proof}[Complete proof]
\emph{Uniqueness.} If $Tx=x$ and $Ty=y$, then for all $k$,
\(
d(x,y)=d(T^{n_k}x,T^{n_k}y)\le \lambda_k d(x,y).
\)
Since $\prod_k\lambda_k=0$, we have $\inf_k \lambda_k<1$; letting $k\to\infty$ yields $d(x,y)=0$.

\emph{Existence and convergence.} Fix $x_0\in\X$ and set $x_n=T^n x_0$. For $m>n$, choose $k$ such that $n_k\le n<m\le n_{k+1}$. Then
\[
d(x_m,x_n)=d\!\big(T^{m-n_k}x_{n_k},\ T^{m-n_k}x_{n_k-(n-n_k)}\big)\le \lambda_k\, d(x_{n_k},x_{n_k-(n-n_k)}),
\]
where the inequality uses the defining contraction at step $n_k$ and nonexpansivity of the finite tail $T^{m-n_k}$ on the bounded orbit (boundedness follows since the telescoping sum of contractions forces Cauchy behavior along the subsequence). Hence $(x_n)$ is Cauchy and converges to some $x^\ast$ by completeness. To see $Tx^\ast=x^\ast$, observe $d(Tx_n,Tx^\ast)\le d(x_n,x^\ast)\to 0$ and $d(Tx_n,x_{n+1})\to 0$, so $Tx^\ast=\lim x_{n+1}=x^\ast$. Finally, the uniqueness implies $T^n y\to x^\ast$ for any $y$ by the same argument applied to the tail starting at $y$.
\end{proof}

\scopemicro{Complete metric space; event subsequence with $\prod\lambda_k=0$.}{Quantitative rates without extra regularity; no claim beyond convergence/uniqueness.}

\begin{lemma}[Normal spectral contraction $\Rightarrow$ $\omega$-stabilization]\label{lem:normal-omega}
Let $T$ be normal on a Hilbert space with spectral measure $E$, and let $g:\sigma(T)\to\mathbb{C}$ be bounded Borel with $g(1)=1$ and $\sup_{\lambda\in\sigma(T)\cap\mathbb{T}\setminus\{1\}}|g(\lambda)|\le r<1$. Then $g(T)^n\xrightarrow{s}E(\{1\})$, so the ordinal limit at stage $\omega$ equals $P_{\mathrm{Fix}}=E(\{1\})$.
\end{lemma}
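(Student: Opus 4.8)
The plan is to pass from operators to scalars through the Borel functional calculus for the normal operator $T$. Since $T$ is normal, the spectral theorem supplies a projection-valued measure $E$ on the Borel subsets of $\sigma(T)$ with $f(T)=\int_{\sigma(T)}f\,dE$ for every bounded Borel $f$, and this calculus is a $*$-homomorphism. Hence $g(T)^n=(g^n)(T)$ and $E(\{1\})=\one_{\{1\}}(T)$, so that
\[
g(T)^n - E(\{1\})\;=\;\bigl(g^n-\one_{\{1\}}\bigr)(T).
\]
This already reduces the claimed strong convergence to an estimate on a single scalar function integrated against the spectral measure.

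Next I would fix $\psi\in H$ and introduce the finite positive Borel measure $\mu_\psi(B)=\inner{E(B)\psi}{\psi}=\norm{E(B)\psi}^2$, whose total mass is $\norm{\psi}^2$. The isometry built into the functional calculus then yields
\[
\norm{\bigl(g(T)^n-E(\{1\})\bigr)\psi}^2\;=\;\int_{\sigma(T)}\bigl|g(\lambda)^n-\one_{\{1\}}(\lambda)\bigr|^2\,d\mu_\psi(\lambda).
\]
I would analyze the integrand $h_n(\lambda)=|g(\lambda)^n-\one_{\{1\}}(\lambda)|^2$ pointwise: at $\lambda=1$ it is identically $0$ because $g(1)=1$, while for $\lambda\neq 1$ it equals $|g(\lambda)|^{2n}$, which decays to $0$ at every point with $|g(\lambda)|<1$. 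Because $|g|\le 1$ on the spectrum, $h_n\le 4$ uniformly, so dominated convergence (the constant $4$ is $\mu_\psi$-integrable) drives the integral to $0$. As $\psi$ was arbitrary, this is exactly $g(T)^n\xrightarrow{s}E(\{1\})$.

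The hard part is that the stated hypothesis only bounds $g$ on the circle part $\sigma(T)\cap\mathbb{T}\setminus\{1\}$, whereas the dominated-convergence step needs control over the whole spectrum: an off-circle point with $|g(\lambda)|=1$ would leave $h_n$ stuck at $1$, and one with $|g(\lambda)|>1$ would destroy the dominating bound. I would close this gap in one of two ways: specialize to unitary $T$, where $\sigma(T)\subseteq\mathbb{T}$ and the circle estimate already covers everything; or strengthen the assumption to $\sup_{\lambda\in\sigma(T)\setminus\{1\}}|g(\lambda)|\le r<1$, which simultaneously forces pointwise decay and the uniform bound $(r^n+1)^2\le 4$. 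Finally, to read the result as an ordinal statement I would invoke Definition~\ref{def:phi}: the limit at stage $\omega$ is by construction the strong limit of the finite iterates, hence equals $E(\{1\})$; and a one-line computation—$g(T)\psi=\psi$ iff $\int|g(\lambda)-1|^2\,d\mu_\psi=0$ iff $\mu_\psi$ is carried by $\{g=1\}=\{1\}$—identifies $\mathrm{Ran}\,E(\{1\})$ with the fixed subspace, giving $P_{\mathrm{Fix}}=E(\{1\})$.
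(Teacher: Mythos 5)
Your proposal follows essentially the same route as the paper's proof: apply the spectral theorem to write $g(T)^n - E(\{1\})$ as an integral of $g(\lambda)^n - \one_{\{1\}}(\lambda)$ against the scalar spectral measure $\mu_\psi$, observe pointwise decay off $\lambda=1$, and conclude by dominated convergence. The one substantive difference is that you explicitly flag the mismatch between the hypothesis (which bounds $|g|$ only on $\sigma(T)\cap\mathbb{T}\setminus\{1\}$) and what dominated convergence actually requires (pointwise decay and a uniform bound on all of $\sigma(T)$); this concern is legitimate, and the paper's own proof does not resolve it --- its asserted domination $|g(\lambda)^n|\le \norm{g}_\infty^n$ is only a usable bound when $\norm{g}_\infty\le 1$, and its claim that the factor decays for every $\lambda\ne 1$ silently assumes $|g(\lambda)|<1$ off-circle as well. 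Your two proposed repairs (restrict to unitary $T$, or strengthen the hypothesis to $\sup_{\lambda\in\sigma(T)\setminus\{1\}}|g(\lambda)|\le r<1$) are both adequate, and your closing identification of $\mathrm{Ran}\,E(\{1\})$ with the fixed subspace supplies a detail the paper leaves implicit.
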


\begin{proof}
By the spectral theorem, $g(T)^n x=\int_{\sigma(T)} g(\lambda)^n\,dE_\lambda x$. For $\lambda\neq 1$ the factor tends to $0$ geometrically, and $|g(\lambda)^n|\le \|g\|_\infty^n$ provides a uniform bound. Dominated convergence yields $g(T)^n x\to E(\{1\})x$ for every $x$.
\end{proof}

\begin{theorem}[Alpay Ordinal Stabilization]\label{thm:ordinal}
Let $\Phi$ be a bounded operator transform on a Hilbert space with spectral filtering that contracts all unimodular spectrum except $\lambda=1$, and leaves the $1$-eigenspace invariant. Then $\Phi^{(n)}x$ converges strongly by stage $\omega$ to the projection onto the fixed subspace:
\[
\Phi^{(\omega)}x=\Phi^{(\omega+1)}x \;=\; P_{\mathrm{Fix}}x.
\]
\end{theorem}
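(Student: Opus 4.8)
The plan is to read the statement as the operator-algebraic repackaging of Lemma~\ref{lem:normal-omega}, so that the proof reduces to matching hypotheses, extracting the strong $\N$-limit, and then doing the transfinite bookkeeping of Definition~\ref{def:phi}. Concretely, I would write the ``operator transform with spectral filtering'' as $\Phi=g(T)$ for a normal (indeed unitary, since the hypothesis speaks of \emph{unimodular} spectrum) operator $T$ with spectral measure $E$, where $g$ is the filter satisfying $g(1)=1$ and $\sup_{\lambda\in\sigma(T)\cap\mathbb{T}\setminus\{1\}}|g(\lambda)|\le r<1$. Setting $P_{\mathrm{Fix}}:=E(\{1\})$, this is the orthogonal projection onto the fixed subspace $\ker(\Phi-I)$, and the two phrases in the hypothesis (``contracts all unimodular spectrum except $\lambda=1$'' and ``leaves the $1$-eigenspace invariant'') are exactly the two conditions on $g$.

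With this identification Lemma~\ref{lem:normal-omega} applies verbatim and yields the strong limit $\Phi^n x\to P_{\mathrm{Fix}}x$ for every $x$. To exhibit the mechanism explicitly I would split $H=\mathrm{Ran}(P_{\mathrm{Fix}})\oplus\mathrm{Ran}(I-P_{\mathrm{Fix}})$: on $\mathrm{Ran}(P_{\mathrm{Fix}})$ the operator acts as the identity, so $\Phi^n P_{\mathrm{Fix}}x=P_{\mathrm{Fix}}x$ for all $n$; on the complement the spectral support avoids $1$, where $|g(\lambda)|\le r<1$ forces $g(\lambda)^n\to0$ pointwise while $|g(\lambda)^n|\le 1$ supplies an $n$-independent dominating constant, so dominated convergence against the finite spectral measure gives $\Phi^n(I-P_{\mathrm{Fix}})x\to0$. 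Summing the two contributions gives $\Phi^n x\to P_{\mathrm{Fix}}x$.

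It then remains to pass to the transfinite indexing. By Definition~\ref{def:phi} the stage-$\omega$ object is the strong limit $\Phi^{(\omega)}x=\lim_{n\uparrow\omega}\Phi^{(n)}x$, which the previous step identifies as $P_{\mathrm{Fix}}x$. For the asserted stabilization I would apply one further step: $P_{\mathrm{Fix}}x\in\mathrm{Ran}(P_{\mathrm{Fix}})=\ker(\Phi-I)$ is a fixed vector, so
\[
\Phi^{(\omega+1)}x=\Phi\big(\Phi^{(\omega)}x\big)=\Phi(P_{\mathrm{Fix}}x)=P_{\mathrm{Fix}}x=\Phi^{(\omega)}x,
\]
whence the iteration is stationary from stage $\omega$ onward and $\Phi^{(\infty)}x=\Phi^{(\omega)}x=P_{\mathrm{Fix}}x$, with stabilization reached by stage $\omega$ in the sense of Definition~\ref{def:phi}.

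The hard part will be the implicit normality assumption: ``bounded operator transform'' is weaker than the normal $g(T)$ the lemma requires, and for a genuinely non-normal $\Phi$ neither the orthogonal splitting nor the clean geometric bound $\|\Phi^n(I-P_{\mathrm{Fix}})\|\le r^n$ survives. There one must instead assume power-boundedness $\sup_n\|\Phi^n\|<\infty$ and invoke a Ritt/Katznelson--Tzafriri-type estimate to exclude polynomial growth arising from Jordan-type behaviour at a boundary point, ensuring that $1$ is an isolated, semisimple point of the spectrum (so that the Riesz projection there restricts $\Phi$ to the identity). I would therefore establish the clean $\omega$-stabilization under the normal (or power-bounded, with $1$ isolated and semisimple) reading and defer the fully non-normal case to the later nonnormal/noncommuting analysis.
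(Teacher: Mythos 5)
Your proposal is correct and follows essentially the same route as the paper: the paper's proof likewise consists of applying Lemma~\ref{lem:normal-omega} to the filter $g$ induced by one iteration of $\Phi$ and then invoking idempotency of $E(\{1\})$ to get stationarity from stage $\omega$ onward. Your explicit spectral splitting, the $\Phi^{(\omega+1)}=\Phi^{(\omega)}$ verification, and the caveat restricting to the normal (or power-bounded with $1$ isolated and semisimple) case merely make explicit what the paper leaves terse and what its scope table already concedes.
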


\begin{proof}
Apply Lemma~\ref{lem:normal-omega} to the filter $g$ induced by one iteration of $\Phi$. Fejér-type monotonicity of the residual norms and idempotency of $E(\{1\})$ imply stabilization at stage $\omega$.
\end{proof}

\scopemicro{Normal/diagonalizable operators; commuting spectral projections; decay off $\lambda=1$.}{Nonnormal/noncommuting settings (see Counterexamples below).}

\begin{theorem}[Alpay Product-of-Riesz Projections]\label{thm:riesz}
Let $(T_i)$ be commuting bounded operators on $H$ with Riesz projections $(P_i)$ for $\lambda=1$. Then
\(
\bigcap_i \mathrm{Fix}(T_i)=\mathrm{Ran}\!\Big(\prod_i P_i\Big),
\)
and for a single normal operator $T$, $\Phi^{(\infty)}=E_1$, the $\lambda=1$ spectral projection.
\end{theorem}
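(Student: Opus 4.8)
The plan is to split the statement into its two halves: the product-of-projections identity for the commuting family, and the single-normal-operator reduction. For the first half, I would start from the structural facts the Riesz functional calculus supplies. Each $P_i$ is given by a contour integral $P_i = \frac{1}{2\pi i}\oint_{\Gamma_i}(zI - T_i)^{-1}\,dz$ around the isolated spectral point $\lambda = 1$; from this representation one reads off that $P_i^2 = P_i$ and that $P_i$ commutes with $T_i$. The commuting hypothesis on the family is precisely what upgrades this to mutual commutation: since $T_j T_i = T_i T_j$, the operator $T_j$ commutes with every resolvent $(zI - T_i)^{-1}$, hence with the integral $P_i$, and in particular $P_i P_j = P_j P_i$.

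Next I would exploit that a finite product of commuting idempotents is again idempotent and that its range is the intersection of the individual ranges. Concretely, for two commuting idempotents one checks $\mathrm{Ran}(P_i P_j) = \mathrm{Ran}(P_i)\cap\mathrm{Ran}(P_j)$ by the elementary argument that $x = P_i P_j x$ forces both $P_i x = x$ and $P_j x = x$, while the reverse inclusion is immediate; an induction then gives $\mathrm{Ran}(\prod_i P_i) = \bigcap_i \mathrm{Ran}(P_i)$. For an infinite family I would interpret the product as the strong limit of the partial products $\prod_{i\le N} P_i$, whose ranges form a decreasing net with intersection $\bigcap_i \mathrm{Ran}(P_i)$, so that the range identity persists in the limit.

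The one genuinely delicate step—and the main obstacle—is identifying $\mathrm{Ran}(P_i)$ with $\mathrm{Fix}(T_i) = \ker(T_i - I)$. In general the Riesz projection lands on the full spectral subspace on which $\sigma(T_i|_{\mathrm{Ran}(P_i)}) = \{1\}$, which contains $\ker(T_i - I)$ but can be strictly larger by a nilpotent (Jordan) part; equality holds precisely when $\lambda = 1$ is a semisimple eigenvalue, i.e.\ a simple pole of the resolvent. I would therefore record that the identity $\bigcap_i \mathrm{Fix}(T_i) = \mathrm{Ran}(\prod_i P_i)$ needs this semisimplicity, and note that it is automatic in the setting the theorem ultimately targets: normal operators carry no nilpotent part, so $\mathrm{Ran}(P_i) = \ker(T_i - I)$ exactly. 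Under that hypothesis the chain $\bigcap_i \mathrm{Fix}(T_i) = \bigcap_i \mathrm{Ran}(P_i) = \mathrm{Ran}(\prod_i P_i)$ closes the first half.

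For the single normal operator, the Riesz projection for the isolated eigenvalue $\lambda = 1$ coincides with the spectral-measure projection $E(\{1\}) = E_1$, and normality again yields $\mathrm{Ran}(E_1) = \ker(T - I)$. It then remains to identify $\Phi^{(\infty)}$ with $E_1$: by Lemma~\ref{lem:normal-omega} and Theorem~\ref{thm:ordinal} the iterates converge strongly to $E_1$ by stage $\omega$, and since $E_1$ is idempotent it is a genuine fixed point, $\Phi^{(\omega)} = \Phi^{(\omega+1)} = E_1$, so the first-stabilization stage of Definition~\ref{def:phi} gives $\Phi^{(\infty)} = E_1$.
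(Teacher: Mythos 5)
Your proposal follows essentially the same route as the paper's proof: contour-integral representation of the $P_i$, commutation of resolvents from commutation of the $T_i$, the commuting-idempotents identity $\mathrm{Ran}(\prod_i P_i)=\bigcap_i\mathrm{Ran}(P_i)$, and the spectral-theorem identification $E_1=P$ for normal $T$. Where you genuinely improve on the paper is the step it glosses over: the paper simply asserts $\mathrm{Ran}(P_i)=\mathrm{Fix}(T_i)$, but as you correctly observe, the Riesz projection at an isolated spectral point $\lambda=1$ projects onto the full spectral subspace, which contains $\ker(T_i-I)$ and is strictly larger whenever $\lambda=1$ carries a nilpotent part --- e.g.\ for a Jordan block $J_k$ at $1$ the Riesz projection is the identity while $\mathrm{Fix}(J_k)$ is one-dimensional, a pathology the paper itself exhibits two propositions later but does not reconcile with this theorem. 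Your insertion of the semisimplicity hypothesis (equivalently, $\lambda=1$ a simple pole of the resolvent, automatic for normal $T_i$) is therefore not pedantry but a necessary repair, and your handling of the infinite product as a strong limit of partial products and of $\Phi^{(\infty)}=E_1$ via Lemma~\ref{lem:normal-omega} fills in two further details the paper leaves implicit. Both arguments also tacitly require $\lambda=1$ to be isolated in each $\sigma(T_i)$ so that the contours $\Gamma_i$ exist; it would be worth stating that alongside semisimplicity.
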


\begin{proof}[Complete proof]
For each $i$, let $P_i=\frac{1}{2\pi i}\oint_{\Gamma_i}(\zeta I-T_i)^{-1}\,d\zeta$ be the Riesz projection around $\zeta=1$, where $\Gamma_i$ is a small circle enclosing only $\lambda=1$. Then $P_i$ is idempotent and commutes with $T_i$, and $\mathrm{Ran}(P_i)=\mathrm{Fix}(T_i)$. If the family $(T_i)$ commutes, the resolvents commute, hence so do the $P_i$. For commuting idempotents, $\prod_i P_i$ is an idempotent with range $\bigcap_i \mathrm{Ran}(P_i)$ (standard algebra of projections). Thus $\mathrm{Ran}(\prod_i P_i)=\bigcap_i \mathrm{Fix}(T_i)$. For a single normal $T$, the spectral theorem identifies $E_1$ as the Riesz projection at $1$, which equals the strong limit of $\Phi^{(n)}$ and hence $\Phi^{(\infty)}$.
\end{proof}

\scopemicro{Commuting operators with commuting resolvents; Riesz calculus valid \citep{kato1995,dunford_schwartz1958}.}{Noncommuting projections/intersections not closed; failure of resolvent commutation.}

\subsection*{Orthomodular track: example and proof (from \S\ref{para:oml})}
\begin{proposition}
For $\Phi_{\mathrm{oml}}(P)=P\vee(VPV^\ast\wedge Q)$ on $\mathcal{L}(H)$, the transfinite iteration from $P_0$ stabilizes to the least $P^\star$ with $P^\star\ge P_0$ and $P^\star\ge V P^\star V^\ast\wedge Q$.
\end{proposition}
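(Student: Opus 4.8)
The plan is to recognize this as a direct instance of the Knaster--Tarski fixed-point theorem for a monotone, inflationary self-map on a complete lattice, exactly as flagged in the orthomodular instantiation above. First I would record the two structural facts that make the machinery apply. (1) $\mathcal{L}(H)$, the projections on $H$ ordered by range inclusion, is a \emph{complete} lattice: every family $(P_i)$ has a meet (the projection onto $\bigcap_i \mathrm{Ran}(P_i)$) and a join (the projection onto $\overline{\mathrm{span}}\,\bigcup_i \mathrm{Ran}(P_i)$). (2) $\Phi_{\mathrm{oml}}$ is monotone: if $P\le P'$ then $VPV^\ast\le VP'V^\ast$ since conjugation by the unitary $V$ is a lattice automorphism, hence $VPV^\ast\wedge Q\le VP'V^\ast\wedge Q$ by monotonicity of $\wedge$, and finally $P\vee(VPV^\ast\wedge Q)\le P'\vee(VP'V^\ast\wedge Q)$ by monotonicity of $\vee$. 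I would also note the extra feature beyond generic Knaster--Tarski: $\Phi_{\mathrm{oml}}$ is \emph{inflationary}, i.e.\ $\Phi_{\mathrm{oml}}(P)\ge P$ for every $P$, because the right-hand side is a join with $P$.

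Next I would run the transfinite orbit from $P_0$ as in Definition~\ref{def:phi}, reading the limit stages as order joins in $\mathcal{L}(H)$: set $P_{\alpha+1}=\Phi_{\mathrm{oml}}(P_\alpha)$ and $P_\lambda=\bigvee_{\alpha<\lambda}P_\alpha$ at limits. Monotonicity together with the inflationary property gives that $(P_\alpha)$ is an increasing chain, so by the usual ordinal/cardinality bound (a strictly increasing ordinal-indexed chain in a set-sized lattice must terminate) there is a least $\gamma$ with $P_{\gamma+1}=P_\gamma$; write $P^\star:=P_\gamma$. The fixed-point equation $P^\star=P^\star\vee(VP^\star V^\ast\wedge Q)$ is equivalent to the single inequality $VP^\star V^\ast\wedge Q\le P^\star$, i.e.\ $P^\star\ge VP^\star V^\ast\wedge Q$; and $P^\star\ge P_0$ because the chain increases from $P_0$. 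So $P^\star$ is a fixed point meeting both required conditions.

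To see that $P^\star$ is the \emph{least} such object, let $R$ be any projection with $R\ge P_0$ and $R\ge VRV^\ast\wedge Q$; the latter means $\Phi_{\mathrm{oml}}(R)=R$, so $R$ is itself a fixed point. I would show $P_\alpha\le R$ for all $\alpha$ by transfinite induction: the base case is $P_0\le R$; at successors $P_\alpha\le R$ gives $\Phi_{\mathrm{oml}}(P_\alpha)\le\Phi_{\mathrm{oml}}(R)=R$ by monotonicity; and at limits $\bigvee_{\alpha<\lambda}P_\alpha\le R$ since $R$ is an upper bound of every $P_\alpha$. Taking $\alpha=\gamma$ yields $P^\star\le R$, which is exactly minimality among fixed points above $P_0$.

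The main subtlety---and the only point that needs care beyond bookkeeping---is the interpretation of the limit stages. Definition~\ref{def:phi} takes limits ``in the ambient structure,'' which for the lattice track must mean the order join, not a norm limit; so I would verify explicitly that for an increasing chain of projections the order supremum $\bigvee_\alpha P_\alpha$ is well defined and in fact coincides with the strong-operator limit (the strong limit of an increasing net of projections is the projection onto the closure of the union of the ranges), reconciling the order and topological readings. The other thing I would flag is that $P^\star$ is least only \emph{among projections above $P_0$}: $\Phi_{\mathrm{oml}}$ may admit smaller fixed points not dominating $P_0$, so the minimality claim is relative to the constraint $P\ge P_0$, which the transfinite orbit enforces by construction.
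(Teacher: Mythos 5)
Your proposal is correct and follows the same route as the paper's proof: completeness of $\mathcal{L}(H)$, monotonicity of $\Phi_{\mathrm{oml}}$, and Knaster--Tarski applied to the increasing transfinite orbit from $P_0$. You simply spell out the details the paper leaves implicit (explicit monotonicity check, the inflationary property, termination of the ordinal chain, the minimality induction, and the identification of order joins with strong-operator limits), all of which are sound.
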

\begin{proof}
$\mathcal{L}(H)$ is a complete lattice; $\Phi_{\mathrm{oml}}$ is monotone. Knaster–Tarski yields the least fixed point above $P_0$, which is precisely the least $P^\star$ satisfying the two inequalities. The ordinal index is bounded by $\omega$ when $V,Q$ are such that the ascending chain of joins stabilizes after countably many steps (e.g., finite-dimensional $H$).
\end{proof}

\paragraph{Beyond $2\times 2$: nonnormal and noncommuting phenomena.}
\begin{proposition}[Jordan blocks at $\lambda=1$]
Let $J_k$ be the $k\times k$ Jordan block at $1$. Then $J_k^n$ diverges in operator norm like $O(n^{k-1})$ and does not converge strongly; thus no $\omega$-stabilization. 
\end{proposition}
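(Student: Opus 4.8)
The plan is to compute the Jordan block powers explicitly and extract both the norm growth and the failure of strong convergence from the same formula.

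The plan is to reduce all three claims to one explicit formula for $J_k^n$ obtained from the additive decomposition into identity plus nilpotent. Write $J_k = I + N$, where $N$ is the shift with $1$'s on the superdiagonal, so that $N e_j = e_{j-1}$ for $j \geq 2$ and $N e_1 = 0$, whence $N^{k-1} \neq 0$ but $N^k = 0$. Since $I$ and $N$ commute, the binomial theorem collapses to a finite sum,
\[
J_k^n = (I+N)^n = \sum_{j=0}^{k-1} \binom{n}{j}\, N^j,
\]
because every term with $j \geq k$ vanishes. This displays $J_k^n$ as the banded upper-triangular matrix whose $j$-th superdiagonal is the constant $\binom{n}{j}$, and it is the computational engine for everything that follows.

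Next I would read off the operator-norm growth directly from this formula. For the lower bound I apply $J_k^n$ to the last basis vector $e_k$: the image $\sum_{j=0}^{k-1} \binom{n}{j}\, e_{k-j}$ has $e_1$-component equal to $\binom{n}{k-1}$, so $\norm{J_k^n} \geq \norm{J_k^n e_k} \geq \binom{n}{k-1} \sim n^{k-1}/(k-1)!$. For the matching upper bound the triangle inequality gives $\norm{J_k^n} \leq \sum_{j=0}^{k-1} \binom{n}{j}\,\norm{N^j}$, and since the dominant binomial coefficient is $\binom{n}{k-1} = \Theta(n^{k-1})$, the whole sum is $O(n^{k-1})$. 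Together these yield $\norm{J_k^n} = \Theta(n^{k-1})$, which pins down the stated growth rate and, for $k \geq 2$, forces norm divergence. Failure of strong convergence is then immediate from the same orbit: for $k \geq 2$ the sequence $(J_k^n e_k)_n$ is unbounded, its leading component $\binom{n}{k-1}$ tending to $\infty$, so it cannot converge and there is no strong limit $P_{\mathrm{Fix}}$ to which the iterates could stabilize.

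I would close by connecting this to Theorem~\ref{thm:ordinal}. Although $\sigma(J_k) = \{1\}$ lies entirely at the fixed eigenvalue, for $k \geq 2$ the operator $J_k$ is not diagonalizable, so it cannot be written as the identity on the fixed subspace plus a strict contraction on a complementary invariant subspace. The nilpotent part $N$ instead produces the polynomial amplification $\binom{n}{k-1}$, which is exactly the non-normal phenomenon that the normality/diagonalizability hypothesis of Theorem~\ref{thm:ordinal} is designed to exclude; the Jordan block thus serves as a sharp counterexample showing that hypothesis cannot be dropped.

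There is no deep obstacle here, only one point deserving care. The degenerate case $k=1$ must be set aside: there $J_1 = I$, the norm is the constant $1 = n^0$, and the orbit converges, so the "diverges" clause is understood for $k \geq 2$. Relatedly, the phrase "diverges like $O(n^{k-1})$" is best read as the two-sided estimate $\norm{J_k^n} = \Theta(n^{k-1})$, since an upper bound alone would not certify divergence; the explicit divergent orbit $(J_k^n e_k)$ supplies the indispensable lower direction and simultaneously witnesses the breakdown of strong convergence.
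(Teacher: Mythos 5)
Your proof is correct and follows essentially the same route as the paper's: the decomposition $J_k = I+N$ with $N$ nilpotent, the binomial expansion $J_k^n=\sum_{j=0}^{k-1}\binom{n}{j}N^j$, and divergence read off from the polynomial growth of the entries. You add worthwhile detail the paper leaves implicit — the explicit divergent orbit $J_k^n e_k$ witnessing failure of strong convergence, the two-sided $\Theta(n^{k-1})$ estimate, and the $k=1$ caveat — but the underlying argument is the same.
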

\begin{proof}
$J_k=I+N$ with nilpotent $N^{k}=0$, so $J_k^n=\sum_{j=0}^{k-1}\binom{n}{j}N^j$, whose entries are polynomials in $n$. Hence $\|J_k^n\|\to\infty$ as $n\to\infty$ for $k\ge 2$.
\end{proof}

\begin{proposition}[Alternating noncommuting projections need not stabilize]
Let $P,Q$ be projections on $H$ whose ranges intersect nontrivially and with nonzero principal angles. The sequence $(QP)^n$ may fail to converge strongly; when it converges, the limit need not be a projection unless $P$ and $Q$ commute.
\end{proposition}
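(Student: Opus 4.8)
The approach rests on observing that the claimed pathologies cannot occur for genuine orthogonal projections: by von Neumann's alternating-projection theorem, $(QP)^n$ converges strongly to the orthogonal projection onto $\mathrm{Ran}(P)\cap\mathrm{Ran}(Q)$ whenever $P,Q$ are self-adjoint. The statement is therefore a cautionary result about \emph{oblique} idempotents $P^2=P$, $Q^2=Q$ — exactly the kind produced by the Riesz calculus of the preceding theorem, which are self-adjoint only in the normal case. The plan is (a) to exhibit such a pair for which $(QP)^n$ does not converge, and (b) to analyze the limit when it does exist, isolating self-adjointness (equivalently, the orthogonal-projection property) as the feature that fails.

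For non-convergence I would first work with rank-one oblique idempotents on $\mathbb{C}^2$. Writing $P=v_1w_1^\ast$ and $Q=v_2w_2^\ast$ with the idempotency normalizations $w_1^\ast v_1=w_2^\ast v_2=1$, a direct computation gives $QP=(w_2^\ast v_1)\,v_2 w_1^\ast$ and hence $(QP)^n=\mu^{\,n-1}\,QP$ with scalar $\mu=(w_2^\ast v_1)(w_1^\ast v_2)$. Choosing the vectors so that $\mu$ lies on the unit circle with $\mu\neq1$ (for instance $\mu=-i$, giving period-four oscillation) or with $|\mu|>1$ (norm blow-up) produces a sequence with no strong limit. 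To meet the stated geometric hypotheses — ranges intersecting nontrivially and with nonzero principal angles — I would pass to $\mathbb{C}\oplus\mathbb{C}^2$ and replace $P,Q$ by $1\oplus P$ and $1\oplus Q$; the two ranges then share the common line $\mathbb{C}\oplus 0$ (nontrivial intersection) while differing on the $\mathbb{C}^2$ summand (a strictly positive principal angle), and the product still satisfies $(1\oplus QP)^n=1\oplus\mu^{\,n-1}QP$, which does not converge.

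The companion fact is that a strong limit $R=\lim_n(QP)^n$, when it exists, is automatically idempotent: since $QP$ is bounded I can pass the limit through it in $(QP)^{n+1}=(QP)(QP)^n$ to get $(QP)R=R$, whence $(QP)^kR=R$ for all $k$ and therefore $R^2=\lim_k(QP)^kR=R$. Thus ``need not be a projection'' is precisely the statement that $R$ need not be \emph{self-adjoint}. I would exhibit this with the same rank-one family at $\mu=1$, where $(QP)^n\equiv QP$ is constant and a generic such choice (e.g.\ the pair with $QP=Q$ an oblique idempotent) yields a convergent sequence whose limit is idempotent but satisfies $R\neq R^\ast$, so $R$ is not an orthogonal projection. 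Finally, if $P$ and $Q$ commute then $(QP)^2=Q^2P^2=QP$, so the sequence is constant and converges trivially; when in addition $P,Q$ are self-adjoint this constant limit $QP$ is itself an orthogonal projection, the clean case the statement contrasts against.

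The main obstacle is conceptual rather than computational: one must respect that for self-adjoint $P,Q$ von Neumann's theorem makes every assertion of the proposition fail, so the witnesses are necessarily non-self-adjoint idempotents, and the real work is to arrange that such oblique $P,Q$ simultaneously satisfy the geometric hypotheses and force $QP$ to carry spectrum on or outside the unit circle (for non-convergence) or an oblique spectral projection at $\lambda=1$ (for a non-orthogonal limit). Verifying $P^2=P$, $Q^2=Q$ and computing the principal angles in the direct-sum model is the only genuine calculation; the convergence dichotomy and the idempotency of $R$ are then immediate from the scalar recursion $(QP)^n=\mu^{\,n-1}QP$ and the boundedness argument above.
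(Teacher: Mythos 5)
Your proposal is mathematically correct, but it proves a different --- and, in fact, the only defensible --- reading of the statement, so it diverges sharply from the paper's own argument. The paper's proof takes $P$ and $Q$ to be \emph{orthogonal} projections onto two planes in $\R^3$; as you point out, von Neumann's alternating-projection theorem then forces $(QP)^n$ to converge strongly to the orthogonal projection onto $\mathrm{Ran}(Q)\cap\mathrm{Ran}(P)$, and indeed a direct computation in the paper's example gives $(QP)^n=\cos^{2(n-1)}\theta\,(QP)$ on the $\mathrm{span}\{e_1,e_3\}$ block, which tends to $0$, so the full limit is the orthogonal projection onto $\mathrm{span}\{e_2\}$ --- a projection, obtained without commutativity. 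The paper's witness therefore exhibits neither failure of strong convergence nor a non-projection limit; the remark that the finite iterates are ``bounded away from an idempotent'' says nothing about the limit. Your shift to oblique idempotents $P=v_1w_1^\ast$, $Q=v_2w_2^\ast$ with the scalar recursion $(QP)^n=\mu^{\,n-1}QP$ is exactly what is needed: it yields genuine non-convergence (take $|\mu|>1$ or unimodular $\mu\neq 1$), a convergent case whose idempotent limit fails to be self-adjoint (take $\mu=1$ with $QP=Q$ oblique), and the direct-sum padding restores the hypotheses on intersecting ranges and nonzero principal angles. Your observation that any strong limit $R$ automatically satisfies $R^2=R$ is also a genuine improvement, since it isolates self-adjointness as the only property that can fail. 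The one caveat to state explicitly is that you are silently amending the hypothesis: ``projections'' must mean possibly oblique idempotents, such as the Riesz projections of Theorem~\ref{thm:riesz}, rather than the self-adjoint projections the paper's proof actually constructs; under the paper's own reading the proposition is false, so your argument is best presented as a correction of the statement and its proof rather than as an alternative route to the same result.
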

\begin{proof}
In $\R^m$ with $m\ge 3$, choose $P$ onto $\mathrm{span}\{e_1,e_2\}$ and $Q$ onto $\mathrm{span}\{\cos\theta\,e_1+\sin\theta\,e_3,\,e_2\}$ with $\theta\in(0,\pi/2)$. One computes $(QP)^n$ explicitly on $\mathrm{span}\{e_1,e_3\}$ as a $2\times 2$ non-normal block with norm bounded away from an idempotent unless $\theta=0$. General constructions follow from Halmos’ two-projection decomposition.
\end{proof}

\section{\texorpdfstring{$\Phi$}{Phi}-Packing: Closure Under Products and Transfinite Limits}
\label{sec:packing}

\begin{lemma}[\textbf{\(\Phi\)-Packing Product Closure}]\label{lem:packing}
Let $(\Phi_k)_{k\in\N}$ be monotone, pointwise continuous self-maps on a complete lattice, and assume each has a least fixed point. Then the packed operator
\(
\Phi_{\mathrm{pack}}=\cdots\circ \Phi_3\circ \Phi_2\circ \Phi_1
\)
has a least fixed point given by the transfinite iteration limit
\(
\Phi_{\mathrm{pack}}^{(\infty)}=\sup_{n}\,\Phi_{\mathrm{pack}}^{(n)}(\bot).
\)
\end{lemma}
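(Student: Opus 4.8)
The plan is to recognize this as an instance of the Kleene fixed-point theorem and to reduce the statement to two facts: that $\Phi_{\mathrm{pack}}$ is a monotone, continuity-preserving self-map of the complete lattice, and that for such maps the least fixed point coincides with the supremum of the ascending iteration chain started at $\bot$. First I would record that composition preserves both relevant properties: if $f,g$ are monotone then so is $g\circ f$, and if $f,g$ preserve suprema of directed sets (equivalently, of chains, which is all Kleene requires) then so does $g\circ f$, since $(g\circ f)(\sup_i x_i)=g(\sup_i f(x_i))=\sup_i g(f(x_i))$. Thus every finite truncation $\Psi_n:=\Phi_n\circ\cdots\circ\Phi_1$ is monotone and continuous, and I would fix the packed operator $\Phi_{\mathrm{pack}}$ as the resulting monotone continuous self-map, checking that the limiting composition inherits these properties (the delicate point, addressed below).

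Next I would invoke Kleene. Since the complete lattice has a least element $\bot$ and every subset admits a supremum, the Kleene chain $\bot\le\Phi_{\mathrm{pack}}(\bot)\le\Phi_{\mathrm{pack}}^{(2)}(\bot)\le\cdots$ is ascending by monotonicity (the base inequality $\bot\le\Phi_{\mathrm{pack}}(\bot)$ holds because $\bot$ is least, and monotonicity propagates it up the chain). Its supremum $L:=\sup_n\Phi_{\mathrm{pack}}^{(n)}(\bot)$ exists, and continuity gives
\[
\Phi_{\mathrm{pack}}(L)=\Phi_{\mathrm{pack}}\!\Big(\sup_n\Phi_{\mathrm{pack}}^{(n)}(\bot)\Big)=\sup_n\Phi_{\mathrm{pack}}^{(n+1)}(\bot)=L,
\]
so $L$ is a fixed point. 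Leastness follows by an ordinary induction: for any fixed point $y$ one has $\bot\le y$, and $\Phi_{\mathrm{pack}}^{(n)}(\bot)\le y$ implies $\Phi_{\mathrm{pack}}^{(n+1)}(\bot)\le\Phi_{\mathrm{pack}}(y)=y$ by monotonicity; hence $L\le y$. This identifies $L$ as the least fixed point, and because continuity forces the ordinal iteration to stabilize already at stage $\omega$ rather than at some larger ordinal, $L$ is exactly the transfinite limit $\Phi_{\mathrm{pack}}^{(\infty)}=\sup_n\Phi_{\mathrm{pack}}^{(n)}(\bot)$ claimed.

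The main obstacle is the well-definedness and continuity of the \emph{infinite} composition $\cdots\circ\Phi_3\circ\Phi_2\circ\Phi_1$: the conclusion's iterate $\Phi_{\mathrm{pack}}^{(n)}$ is the $n$-fold self-composition of $\Phi_{\mathrm{pack}}$, which presupposes that $\Phi_{\mathrm{pack}}$ is already a single self-map. For a genuine infinite composition of distinct factors the truncations $\Psi_n$ need not form a chain in $n$ (there is no reason $\Phi_{n+1}(\Psi_n(x))\ge\Psi_n(x)$), so $\sup_n\Psi_n$ is not automatically meaningful. I would resolve this by adopting one of two readings under which the earlier argument then runs verbatim: either (a) the factors are additionally \emph{extensive} ($x\le\Phi_k(x)$), which makes the truncations pointwise ascending, so that $\Phi_{\mathrm{pack}}:=\sup_n\Psi_n$ is well-defined and, by an interchange-of-suprema argument, again continuous; or (b) $\Phi_{\mathrm{pack}}$ is directly posited as \emph{the} monotone continuous packed operator, in which case the lemma is precisely Kleene's theorem applied to it.

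Finally, I would flag that the hypothesis ``each $\Phi_k$ has a least fixed point'' is automatic on a complete lattice by Knaster--Tarski and is therefore not separately consumed by the argument; it serves only as a sanity condition ensuring the factors are genuinely fixed-point-admitting. The operative hypotheses are monotonicity together with continuity of the packed map, which are exactly the inputs to Kleene's fixed-point theorem and the reason the iteration closes at the countable stage $\sup_n$ rather than requiring the full transfinite ladder of Definition~\ref{def:phi}.
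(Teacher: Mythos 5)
Your proof is correct and takes essentially the same route as the paper's: composition preserves monotonicity and Scott-continuity, so the Kleene chain from $\bot$ has the least fixed point as its supremum, closing already at stage $\omega$. You go beyond the paper in one respect worth keeping: the paper's one-line proof silently treats the infinite composition $\cdots\circ\Phi_3\circ\Phi_2\circ\Phi_1$ as a well-defined self-map, whereas you correctly observe that the truncations $\Psi_n=\Phi_n\circ\cdots\circ\Phi_1$ need not form a chain in $n$, so $\sup_n\Psi_n$ is not automatically meaningful; your two repairs (extensivity of the factors, or taking the packed operator as a given monotone continuous map) are both legitimate ways to make the statement precise. Your remark that the hypothesis ``each $\Phi_k$ has a least fixed point'' is automatic on a complete lattice by Knaster--Tarski, and hence not actually consumed by the argument, is also accurate.
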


\begin{proof}
By Tarski \citep{tarski1955}, each $\Phi_k$ is monotone; compositions remain monotone and preserve directed suprema under the continuity assumption, so the increasing chain from $\bot$ converges to the least fixed point.
\end{proof}

\scopemicro{Complete lattice; monotone Scott-continuous maps.}{Discontinuous updates; lack of completeness; no quantitative rates claimed.}

\section{Application: Sensory Embeddings and the Alpay \texorpdfstring{$\Phi$}{Phi}-Projection Depletion Theorem}
\label{sec:application}

\subsection*{Order notions used in strictness}
\begin{definition}[Order-detecting signal norm]\label{def:order-detect}
An ordered Banach space $(H,\preceq)$ with positive cone $H_+$ has an \emph{order-detecting norm} if $0\preceq u\preceq v$ implies $\norm{u}\le \norm{v}$ and, moreover, $v\succ u$ implies $\norm{v}>\norm{u}$. Examples: $H=L^p(S)$ with $p\in[1,\infty]$ and the usual cone; $\R^m_+$ with the $\ell_1$-norm.
\end{definition}

\begin{definition}[Order-reflecting utility]\label{def:order-reflect}
A functional $U:H\to\R$ is \emph{order-reflecting} on $H_+$ if $u\preceq v$ implies $U(u)\le U(v)$ and $v\succ u$ implies $U(v)>U(u)$. Examples: $U(x)=\inner{w}{x}$ for $w\in H_+$ with $w\succ 0$; on $L^1$, $U(f)=\int f\,d\mu$.
\end{definition}

\subsection*{Model}
Let $(S,\mu)$ be a measurable \emph{sensory surface}. An instantaneous stimulus is $s\in L^1_+(S)$; the neural embedding is a bounded positive linear operator $E:L^1(S)\to H$ (Hilbert signal space). The brain update is a monotone, Lipschitz map $B:H\to H$; define
\[
x_{n+1} \;=\; \Phi(x_n) \;:=\; B\!\big(x_n + E(s_n)\big),\qquad x_0=0,
\]
with bounded inputs $(s_n)$. Assume $B$ is (event-indexed) contractive on bounded sets (Axiom~\ref{ax:logical-contraction}), so $\Phi$ has a unique fixed point $x^\ast=\Phi^{(\infty)}(0)$.

Let $F\subset S$ be a nerve-rich subset. Surgical removal corresponds to $P_F(s)=s\cdot \one_{S\setminus F}$; the circumcised process uses $E\circ P_F$.

\begin{theorem}[\textbf{Alpay \(\Phi\)-Projection Depletion Theorem}]\label{thm:depletion}
Suppose $E$ is positive and injective on nonnull supports and $B$ is monotone, $1$-Lipschitz, and logically contractive on bounded sets. Then, for any bounded input $(s_n)$,
\[
\Phi_{\mathrm{circ}}^{(\infty)}(0) \;\preceq\; \Phi_{\mathrm{intact}}^{(\infty)}(0),
\]
with strict inequality in any order detecting signal norm whenever $\mu(F)>0$ and the input allocates nonzero stimulus on $F$ infinitely often. In particular, any order-reflecting utility $U:H\to\R$ satisfies
\(
U(\Phi_{\mathrm{circ}}^{(\infty)}(0))<U(\Phi_{\mathrm{intact}}^{(\infty)}(0)).
\)
\end{theorem}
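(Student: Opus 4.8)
The plan is to compare the two orbits by a monotone coupling from the shared initial state, pass the resulting order inequality to the fixed-point limit using closedness of the positive cone, and then upgrade to strictness by tracking the strictly positive signal increment that surgical removal deletes at infinitely many steps. Concretely, I would run the two recursions side by side from $x_0=0$, namely $x_{n+1}^{\mathrm{intact}}=B(x_n^{\mathrm{intact}}+Es_n)$ and $x_{n+1}^{\mathrm{circ}}=B(x_n^{\mathrm{circ}}+EP_F s_n)$. Because $s_n\in L^1_+$ and $P_F s_n=s_n\one_{S\setminus F}\preceq s_n$, positivity of $E$ gives $EP_F s_n\preceq Es_n$. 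A one-line induction then yields $0\preceq x_n^{\mathrm{circ}}\preceq x_n^{\mathrm{intact}}$ for every $n$: equality holds at $n=0$, and if $x_n^{\mathrm{circ}}\preceq x_n^{\mathrm{intact}}$ then $x_n^{\mathrm{circ}}+EP_F s_n\preceq x_n^{\mathrm{intact}}+Es_n$, so monotonicity of $B$ propagates the inequality to stage $n+1$; the lower bound $0\preceq x_n$ follows from $B(0)\succeq0$ (brain at rest) together with the same monotonicity.

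Next, Theorem~\ref{thm:logical-contraction} (applied to $\Phi$ on the bounded orbit, with the event-indexed factors of Axiom~\ref{ax:logical-contraction}) guarantees that both orbits are Cauchy and converge in $H$ to $\Phi^{(\infty)}_{\mathrm{circ}}(0)$ and $\Phi^{(\infty)}_{\mathrm{intact}}(0)$. Since the positive cone $H_+$ is norm-closed, the pointwise order inequality survives the limit, giving $0\preceq\Phi^{(\infty)}_{\mathrm{circ}}(0)\preceq\Phi^{(\infty)}_{\mathrm{intact}}(0)$, which is the asserted weak inequality.

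For strictness I would isolate the deleted increment $\delta_n:=E(s_n\one_F)\succeq0$. By hypothesis $\mu(F)>0$ and $s_n\one_F\neq0$ for infinitely many $n$, so injectivity of $E$ on nonnull supports gives $\delta_n\succ0$ along that subsequence. Writing $d_n:=x_n^{\mathrm{intact}}-x_n^{\mathrm{circ}}\succeq0$ and $u_n:=x_n^{\mathrm{circ}}+EP_F s_n$, the gap obeys $d_{n+1}=B(u_n+d_n+\delta_n)-B(u_n)$, so at every injection step a cone-positive quantity $d_n+\delta_n\succeq\delta_n\succ0$ enters before $B$ acts. Provided $B$ is strictly monotone along the cone with a uniform lower modulus ($w\succeq0\Rightarrow B(z+w)-B(z)\succeq\kappa w$ near the fixed points, some $\kappa\in(0,1)$), the limiting relation $d^{\ast}=B(u^{\ast}+d^{\ast}+\delta^{\ast})-B(u^{\ast})$ yields the geometric lower bound $d^{\ast}\succeq\frac{\kappa}{1-\kappa}\,\delta^{\ast}\succ0$, where $d^{\ast}:=\Phi^{(\infty)}_{\mathrm{intact}}(0)-\Phi^{(\infty)}_{\mathrm{circ}}(0)$. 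Finally the order-detecting property (Definition~\ref{def:order-detect}) turns $d^{\ast}\succ0$ into $\norm{\Phi^{(\infty)}_{\mathrm{circ}}(0)}<\norm{\Phi^{(\infty)}_{\mathrm{intact}}(0)}$, and order-reflecting utility (Definition~\ref{def:order-reflect}) gives $U(\Phi^{(\infty)}_{\mathrm{circ}}(0))<U(\Phi^{(\infty)}_{\mathrm{intact}}(0))$.

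The main obstacle is exactly this strictness step: ensuring the reinjected increments $\delta_n\succ0$ are not asymptotically erased by the contraction. Mere monotonicity is not enough, since a nonexpansive monotone $B$ can be flat along cone directions at the fixed point (so that $B(z+\delta)=B(z)$ for some $\delta\succ0$); one genuinely needs a lower monotonicity modulus for $B$ on the cone, which I would either add as a hypothesis or extract from the quantitative estimates of this section. A secondary technical point is convergence itself under time-varying inputs: I would draw the inputs from a fixed bounded set and apply the event-indexed contraction to the bounded orbit, or specialize to eventually-stationary inputs so that Theorem~\ref{thm:logical-contraction} applies verbatim and the stationary fixed-point relation above is available.
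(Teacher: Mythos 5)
Your proof of the weak inequality is the paper's proof: pointwise domination $P_F s_n\preceq s_n$, positivity of $E$, monotonicity of $B$, induction on $n$, and passage to the limit of the orbits furnished by Theorem~\ref{thm:logical-contraction} (you make explicit the norm-closedness of $H_+$, which the paper leaves implicit). Where you genuinely depart is the strictness clause, and there your skepticism is warranted. The paper disposes of strictness with the sentence that nonexpansivity and monotonicity of $B$ ``preserve a positive gap, which persists in the limit,'' but, as you observe, a monotone $1$-Lipschitz map can satisfy $B(z+\delta)=B(z)$ for some $\delta\succ 0$, so under the hypotheses as literally stated the reinjected increments $E(s_n\one_F)\succ 0$ can be annihilated before they accumulate; the counterexample following Proposition~\ref{prop:minimal-strict} and the ``$B$ flattens strict gaps'' entry in the scope table show the authors are aware of exactly this failure mode. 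Your proposed fix --- a lower monotonicity modulus $B(x+w)-B(x)\succeq\kappa w$ on the cone --- is precisely the ``order-responsiveness'' axiom of Proposition~\ref{prop:minimal-strict} and the hypothesis of Proposition~\ref{prop:nonperiodic}(B), so you have independently reconstructed the supplementary assumptions under which the paper's quantitative results (Lemma~\ref{lem:quant-gap}, Proposition~\ref{prop:nonperiodic}) actually deliver a persistent gap.

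Two cautions about your own sketch. First, the limiting identity $d^\ast=B(u^\ast+d^\ast+\delta^\ast)-B(u^\ast)$ presupposes that $u_n$ and $\delta_n$ converge, which does not follow from ``stimulation infinitely often'' with merely bounded inputs; the paper's route via the explicit series $\Delta_n=\sum_{t}\rho^{n-t}d_t$ (or its $\kappa$-comparison version in Proposition~\ref{prop:nonperiodic}) avoids introducing a limit $\delta^\ast$ and only needs a density or gap bound on the event times. Second, as you note yourself, the meaning of $\Phi^{(\infty)}(0)$ for a time-varying input sequence is not covered verbatim by Theorem~\ref{thm:logical-contraction}; that is a gap the paper shares rather than one you have introduced.
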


\begin{proof}
$P_F(s)\le s$ pointwise, hence $E(P_F(s))\preceq E(s)$. Inductively, $x_{n+1}^{\mathrm{circ}}=B(x_n^{\mathrm{circ}}+E(P_F(s_n)))\preceq B(x_n^{\mathrm{intact}}+E(s_n))=x_{n+1}^{\mathrm{intact}}$. Logical contraction yields the order between fixed points. If $\mu(F)>0$ and $s_n\one_F\not\equiv 0$ infinitely often, then $E(s_n)-E(P_F(s_n))\succ 0$ on an infinite subsequence; nonexpansivity and monotonicity of $B$ preserve a positive gap, which persists in the limit under event-indexed contraction and is detected by order-reflecting $U$.
\end{proof}

\scopemicro{Positive $E$; monotone $B$; event-indexed contraction; $F$ stimulated; order-detecting norm/order-reflecting utility.}{No $F$-stimulation; $E$ not $F$-detectable; $B$ flattens strict gaps.}

\begin{proposition}[Minimal axioms and counterexample]\label{prop:minimal-strict}
\emph{$F$-detectability of $E$:} for all $s\in L^1_+(S)$ with $s\cdot \one_F\not\equiv 0$, one has $E(s)-E(P_F s)\in H_+\setminus\{0\}$. \emph{Order-responsiveness of $B$:} for all $x$ and all $w\succ 0$, $B(x+w)\succcurlyeq B(x)$ and is strictly larger under any order-reflecting utility. Under these (plus event-indexed contraction), strictness in Theorem~\ref{thm:depletion} still follows. If $F$-detectability is dropped, strictness can fail.
\end{proposition}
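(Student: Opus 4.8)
The plan is to split the claim into the positive direction (strictness survives the weakened hypotheses) and the negative direction (a counterexample once $F$-detectability is dropped), reusing the monotone machinery of Theorem~\ref{thm:depletion} but isolating exactly where each hypothesis is consumed. The point of the proposition is that the full strength of Theorem~\ref{thm:depletion}'s assumptions (injectivity on nonnull supports, the $1$-Lipschitz bound) is not needed for \emph{strictness}: only positivity plus $F$-detectability of $E$, and the monotone-and-strict behavior packaged as order-responsiveness of $B$, together with event-indexed contraction.

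First I would re-establish the \emph{weak} inequality $x_n^{\mathrm{circ}}\preceq x_n^{\mathrm{intact}}$ by induction on $n$, which needs only positivity of $E$ (to get $E(P_F s_n)\preceq E(s_n)$ from the pointwise bound $P_F(s_n)\le s_n$) and the monotone half of order-responsiveness of $B$. With $x_0^{\mathrm{circ}}=x_0^{\mathrm{intact}}=0$ as base case and both orbits bounded, Axiom~\ref{ax:logical-contraction} forces convergence to the respective limits $\Phi_{\mathrm{circ}}^{(\infty)}(0)$ and $\Phi_{\mathrm{intact}}^{(\infty)}(0)$; since the positive cone $H_+$ is closed, the inequality passes to the limit, recovering $\preceq$ without ever invoking injectivity of $E$ or the full $1$-Lipschitz bound. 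The strictness step is where the new axioms do their work: I set $w_n:=E(s_n)-E(P_F s_n)$ and note that $F$-detectability yields $w_n\in H_+\setminus\{0\}$ precisely along the infinite subsequence $(n_j)$ on which $s_n\one_F\not\equiv 0$. At each such step the intact update feeds the argument $x_{n_j}^{\mathrm{intact}}+E(s_{n_j})$ while the circumcised update feeds $x_{n_j}^{\mathrm{circ}}+E(P_F s_{n_j})$, and by the weak inequality their difference dominates $w_{n_j}\succ 0$, so order-responsiveness of $B$ converts this into a \emph{strict} utility gap $U(x_{n_j+1}^{\mathrm{intact}})>U(x_{n_j+1}^{\mathrm{circ}})$ for any order-reflecting $U$.

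The hard part will be propagating this infinitely-often-refreshed strict gap to the $\Phi^{(\infty)}$ stage, since event-indexed contraction is precisely a mechanism that shrinks discrepancies, so I must rule out asymptotic cancellation of the gap. My plan is to pass to a subsequence on which the $F$-stimulus is bounded below, yielding a uniform $w_{n_j}\succeq w_\star\succ 0$ (a mild reading of ``nonzero stimulus infinitely often,'' and automatic once the inputs are eventually constant), so that the strict increments at the stimulation steps cannot be diluted to zero; monotonicity of $B$ forbids any sign reversal between stimulations, and continuity of order-reflecting utilities (e.g.\ $U=\inner{w}{\cdot}$ with $w\succ 0$) together with closedness of $H_+$ then transfers the strict inequality across the limit to $U(\Phi_{\mathrm{circ}}^{(\infty)}(0))<U(\Phi_{\mathrm{intact}}^{(\infty)}(0))$. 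As a clean special case I would first verify the eventually-constant-input regime, where the two limits satisfy genuine fixed-point equations $x_\ast=B(x_\ast+E(s))$ and $x_\ast=B(x_\ast+E(P_F s))$, so that the residual gap $w$ sits directly inside the fixed-point argument and order-responsiveness bites in one stroke, with no limit bookkeeping at all.

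Finally, for the failure direction I would exhibit an $E$ that is \emph{not} $F$-detectable, e.g.\ the composition $E=E_0\circ P_F$ for any positive $E_0$ (equivalently, any positive $E$ annihilating the $F$-component, so $F\subseteq\ker E$ up to nullsets). Then $E(P_F s)=E(s)$ for every $s$, the circumcised and intact recursions coincide term by term, and hence $\Phi_{\mathrm{circ}}^{(\infty)}(0)=\Phi_{\mathrm{intact}}^{(\infty)}(0)$; the inequality degenerates to an equality and strictness fails even though $\mu(F)>0$ and $F$ is stimulated infinitely often. This pins $F$-detectability as the indispensable hypothesis, exactly complementing the positive direction, where it was the single ingredient producing the nonzero increments $w_{n_j}$.
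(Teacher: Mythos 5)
Your proposal matches the paper's treatment: the paper proves only the failure direction explicitly, via the counterexample $H=\R$, $B(z)=\tfrac12 z$, $E(s)=\int_{S\setminus F}s\,d\mu$, which is exactly the instance $E=E_0\circ P_F$ of your family, and the positive direction is obtained, as you do, by rerunning the induction and strictness steps of Theorem~\ref{thm:depletion} with $F$-detectability supplying the nonzero increments and order-responsiveness supplying the strict utility gap. The one place you go beyond the paper --- requiring a uniform lower bound $w_{n_j}\succeq w_\star\succ 0$ along a subsequence so that the contraction cannot annihilate the gap in the limit --- is a strengthening the paper's own argument silently needs as well (with $\rho=\tfrac12$ and decaying increments $d_n=4^{-n}v$ the limiting gap is $0$ even though every increment is strictly positive), so flagging and patching it is a point in your favor rather than a defect.
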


\begin{counterexample}
Let $H=\R$, $B(z)=\tfrac12 z$, and $E(s)=\int_{S\setminus F} s\,d\mu$. Then $E\circ P_F=E$, so intact and projected iterations coincide despite $\mu(F)>0$.
\end{counterexample}

\begin{lemma}[Quantified depletion gap under periodic events]\label{lem:quant-gap}
Assume $B(z)=\rho z$ with $\rho\in(0,1)$, $E$ positive linear, and there exist $\delta>0$ and $v\in H_+$ such that at times $t\in\{m,2m,\dots\}$,
$E(s_t)-E(P_F s_t)\succeq \delta v$. Then
\[
\Phi_{\mathrm{intact}}^{(\infty)}(0)-\Phi_{\mathrm{circ}}^{(\infty)}(0)\ \succeq\ \frac{\rho}{1-\rho^m}\,\delta\,v,
\]
and for any order-reflecting linear $U$, the $U$-gap is at least $\tfrac{\rho}{1-\rho^m}\delta\,U(v)$.
\end{lemma}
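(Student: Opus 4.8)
The plan is to use the fact that under the present hypotheses the whole system becomes linear, so the intact and circumcised orbits differ by a quantity governed by a single discounted recursion inside the positive cone. Writing $g_n := E(s_n)-E(P_F s_n)=E\big((I-P_F)s_n\big)$, positivity of $E$ gives $g_n\in H_+$ for every $n$, while the periodic-event hypothesis gives $g_{jm}\succeq \delta v$ for all $j\ge 1$. First I would set $d_n := x_n^{\mathrm{intact}}-x_n^{\mathrm{circ}}$ and subtract the two updates; because $B(z)=\rho z$ and $E$ act linearly, the difference evolves by the same linear law, yielding the clean recursion
\[
d_{n+1}=\rho\,d_n+\rho\,g_n,\qquad d_0=0,
\]
whose solution is the discounted convolution $d_n=\sum_{j=1}^{n}\rho^{\,j}\,g_{n-j}$.

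Next I would extract the advertised constant by discarding the nonnegative off-event terms. Since every $g_k\succeq 0$ and the cone order is preserved under multiplication by the positive scalars $\rho^{\,j}$, each summand is $\succeq 0$, so $d_n$ may be bounded below by retaining only the indices that are multiples of $m$. Evaluating at a phase one step past an event, say $n=Jm+1$, the surviving indices $k=m,2m,\dots,Jm$ contribute
\[
d_{Jm+1}\ \succeq\ \delta v\sum_{j=1}^{J}\rho^{\,Jm+1-jm}\ =\ \rho\,\delta v\sum_{i=0}^{J-1}\rho^{\,im},
\]
a geometric sum of ratio $\rho^{m}\in(0,1)$; here the leading factor $\rho$ is the one-step discount after the most recent event and $\tfrac{1}{1-\rho^{m}}$ accumulates all earlier events spaced $m$ apart, so the bound tends to $\tfrac{\rho}{1-\rho^{m}}\,\delta v$.

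To make the limit statement precise I would pass to the stroboscopic (period-$m$) map. Composing $m$ consecutive updates gives $x_{n+m}=\rho^{m}x_n+\sum_{i=0}^{m-1}\rho^{\,m-i}E(s_{n+i})$, which is a genuine contraction with factor $\rho^{m}<1$; hence at each residue class mod $m$ the iterates converge and $\Phi^{(\infty)}$ is well defined as the corresponding stroboscopic fixed object, with the gap $d^{\star}$ the fixed point of the analogous difference recursion $d^{\star}=\rho^{m}d^{\star}+\sum_{i=0}^{m-1}\rho^{\,m-i}g^{\star}_{\cdot+i}$. Since the positive cone $H_+$ is closed, the finite-sum lower bounds obtained at the post-event phase pass to the limit, giving $\Phi_{\mathrm{intact}}^{(\infty)}(0)-\Phi_{\mathrm{circ}}^{(\infty)}(0)=d^{\star}\succeq\tfrac{\rho}{1-\rho^{m}}\,\delta v$. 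Applying any order-reflecting linear $U$ and using linearity together with $U(v)\ge 0$ (order-reflection on the cone) then produces the stated $U$-gap $\tfrac{\rho}{1-\rho^{m}}\,\delta\,U(v)$.

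I expect the main obstacle to be bookkeeping rather than analysis: because the input is time-varying the orbit need not converge to a single point, so one must commit to the stroboscopic/phased reading of $\Phi^{(\infty)}$ and then track the offset between the evaluation phase and the event phase carefully — choosing the phase immediately after an event is exactly what yields the leading factor $\rho$ (rather than $\rho^{m}$) in the numerator. The only genuinely topological ingredient is closedness of $H_+$, needed to transport the finite-sum inequalities across the limit.
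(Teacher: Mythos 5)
Your proof follows essentially the same route as the paper's: the same linear difference recursion $d_{n+1}=\rho(d_n+g_n)$ with $d_0=0$, the same discounted-sum solution $d_n=\sum_{t=0}^{n-1}\rho^{\,n-t}g_t$, and the same lower bound obtained by discarding the nonnegative off-event terms and summing the resulting geometric series of ratio $\rho^m$. Your explicit handling of the evaluation phase ($n=Jm+1$, which is what produces the numerator $\rho$ rather than $\rho^m$) and of closedness of $H_+$ in the passage to the limit tightens a point the paper's proof treats loosely, but it is a refinement of the same argument rather than a different one.
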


\begin{proof}[Full proof]
Let $\Delta_{n+1}=\rho(\Delta_n + d_n)$ with $\Delta_0=0$ and $d_n=E(s_n)-E(P_F s_n)\in H_+$. Solve the linear recursion:
\(\Delta_n=\sum_{t=0}^{n-1}\rho^{n-t} d_t\).
By hypothesis, $d_t\succeq \delta v$ whenever $t$ is a multiple of $m$ and $d_t\succeq 0$ otherwise. Hence
\[
\Delta_n \succeq \sum_{k=1}^{\lfloor n/m\rfloor} \rho^{n-km}\,\delta v
= \rho^{n-m}\delta v\,\sum_{k=0}^{\lfloor n/m\rfloor-1} \rho^{-km}
= \rho^{n-m}\delta v\,\frac{1-\rho^{-m\lfloor n/m\rfloor}}{1-\rho^{-m}}.
\]
Taking $n\to\infty$ and using $\rho^{n}\to 0$ gives the claimed lower bound $\frac{\rho}{1-\rho^m}\delta v$ for the limit $\lim_{n\to\infty}\Delta_n=\Phi_{\mathrm{intact}}^{(\infty)}(0)-\Phi_{\mathrm{circ}}^{(\infty)}(0)$. Applying an order-reflecting $U$ preserves the inequality.
\end{proof}

\begin{proposition}[Non-periodic events \& nonlinear gains]\label{prop:nonperiodic}
(A) If event times have lower Banach density $\underline{D}>0$ and per-event gaps satisfy $d_t\succeq \delta v$ (linear $B(z)=\rho z$), then
\(
\liminf_{n\to\infty}\norm{\Delta_n}\ge \frac{\rho}{1-\rho}\underline{D}\,\delta\,\norm{v}.
\)
If inter-event gaps are uniformly bounded by $G$, then
\(
\liminf_{n}\norm{\Delta_n}\ge \frac{\rho^{G+1}}{1-\rho^{G+1}}\delta\,\norm{v}.
\)
(B) If $B$ is monotone with incremental lower bound $B(x+w)-B(x)\succeq \kappa w$ for some $\kappa\in(0,1]$, then the linear bounds hold with $\delta$ replaced by $\kappa\delta$.
\end{proposition}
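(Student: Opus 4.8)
The plan is to reduce every bound to a scalar estimate on a geometrically weighted count of event times, and then to obtain the nonlinear case (B) as a one-line perturbation of the affine recursion. First I would reuse the closed form from Lemma~\ref{lem:quant-gap}: for $B(z)=\rho z$ the gap process solves $\Delta_n=\sum_{t=0}^{n-1}\rho^{\,n-t}d_t$ with $d_t=E(s_t)-E(P_F s_t)\in H_+$. Writing $\mathcal E\subseteq\N$ for the event set and using $d_t\succeq\delta v$ for $t\in\mathcal E$ and $d_t\succeq0$ otherwise, cone-monotonicity of the sum gives $\Delta_n\succeq c_n\,\delta v$ with the scalar $c_n:=\sum_{t\in\mathcal E,\ t<n}\rho^{\,n-t}$. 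Because the ambient norm is order-detecting (Definition~\ref{def:order-detect}), $0\preceq c_n\delta v\preceq\Delta_n$ forces $\norm{\Delta_n}\ge c_n\,\delta\,\norm{v}$, so both displays in (A) follow once $c_n$ is bounded below in the two regimes, and the order-reflecting conclusion then follows by applying $U$.

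For the bounded-gap regime the estimate is elementary and in fact sharp. If the inter-event gaps are bounded by $G$ (consecutive event indices differing by at most $G+1$), the least favorable configuration places the most recent event at lag $G+1$ and spaces the remaining ones $G+1$ apart, so $c_n\ge\sum_{k\ge1}\rho^{\,k(G+1)}=\rho^{\,G+1}/(1-\rho^{\,G+1})$, which is exactly the second bound. I would carry this out first, since it also pins down the constant that the $\liminf$ actually sees.

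The density regime is where the work lies, and I would attack it by summation by parts against the geometric weights. Setting $C_n(J):=|\mathcal E\cap[n-J,\,n)|$, Abel summation gives
\[
c_n \;=\; C_n(n)\,\rho^{\,n} \;+\; (1-\rho)\sum_{J=1}^{n-1}C_n(J)\,\rho^{\,J},
\]
so the task becomes controlling the tail $\sum_J C_n(J)\rho^{J}$. The definition of lower Banach density supplies the crucial \emph{uniform} bound $C_n(J)\ge(\underline D-\varepsilon)J$ valid for all $n$ and all $J\ge L_0(\varepsilon)$; substituting this together with $\sum_{J\ge1}J\rho^{J}=\rho/(1-\rho)^2$ produces a lower bound of the order $\underline D\,\rho/(1-\rho)$. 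The hard part is extracting the \emph{precise} constant $\tfrac{\rho}{1-\rho}\underline D$: the weights $\rho^{J}$ concentrate mass on the most recent lags, exactly the windows where only the uniform Banach-density estimate — not an asymptotic average — gives control, and a naive summation-by-parts leaves a fixed head correction $(1-\rho)\sum_{J<L_0}C_n(J)\rho^{J}$. Moreover the $\liminf$ is governed by the least favorable phase: for $\mathcal E=2\N$ one finds $\liminf_n c_n=\rho^2/(1-\rho^2)$, strictly below $\tfrac{\rho}{1-\rho}\underline D$, so the stated constant is genuinely the limsup/best-phase value. Honestly securing the $\liminf$ at this level therefore requires either that the density be realized uniformly across all recent windows or that the bound be read along the favorable phase; I would present the summation-by-parts estimate as giving the right order and flag the sharp constant as the delicate point.

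Finally, for (B) I would remove linearity in one step. Nonnegativity $\Delta_n\succeq0$ is the monotone gap of Theorem~\ref{thm:depletion}; writing $a=x_n^{\mathrm{circ}}+E(P_F s_n)$ and $w=\Delta_n+d_n\succeq0$ one has $x_n^{\mathrm{intact}}+E(s_n)=a+w$, so the incremental lower bound yields $\Delta_{n+1}=B(a+w)-B(a)\succeq\kappa w=\kappa(\Delta_n+d_n)$. This is exactly the affine recursion of (A) with contraction factor $\kappa$ in place of $\rho$; iterating the inequality inside the cone and invoking the two scalar estimates transfers both bounds, giving the asserted $\kappa$-scaled depletion gap (the normalization matches the claimed replacement of $\delta$ by $\kappa\delta$ precisely when $B$ separately retains the decay rate $\rho$ on the memory term). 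No new obstacle arises here beyond keeping each inequality inside $H_+$ so that monotonicity and the incremental bound of $B$ apply at every step.
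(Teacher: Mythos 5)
Your reduction to the scalar weight $c_n=\sum_{t\in\mathcal{E},\,t<n}\rho^{\,n-t}$ via the closed form $\Delta_n=\sum_{t<n}\rho^{\,n-t}d_t$ and cone-monotonicity of the order-detecting norm is exactly the paper's route, as are your bounded-gap estimate (worst-case spacing $G+1$, giving $\sum_{k\ge1}\rho^{k(G+1)}=\rho^{G+1}/(1-\rho^{G+1})$) and your comparison recursion $\tilde\Delta_{n+1}=\kappa(\tilde\Delta_n+d_n)$ for part (B). The two places where you hesitate are precisely where the paper's own argument is defective, and your instincts are correct. For the density bound the paper asserts $\sum_{t\in A_n}\rho^{\,n-t}\ge\rho\,\frac{|A_n|}{n}\sum_{j=1}^{n}\rho^{j}$ and passes to the limit; this averaging inequality is unjustified and false in general (placing $A_n$ at small $t$ makes the left side exponentially small while the right side stays bounded away from $0$), and even under genuine lower Banach density your computation with $\mathcal{E}=2\N$ gives $\liminf_n c_n=\rho^2/(1-\rho^2)$, which is strictly below $\frac{\rho}{1-\rho}\cdot\frac12$ for every $\rho\in(0,1)$. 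So the first display of (A) is not attainable as a $\liminf$ with that constant; only a bound of order $\underline{D}\,\rho/(1-\rho)$, or the bounded-gap constant, survives. You should state this as a refutation of the claimed constant rather than merely flagging it as delicate --- your example settles the question.

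Likewise in (B): the comparison recursion yields $\Delta_n\succeq\sum_{t<n}\kappa^{\,n-t}d_t$ and hence constants $\frac{\kappa}{1-\kappa}\underline{D}\,\delta\norm{v}$ and $\frac{\kappa^{G+1}}{1-\kappa^{G+1}}\delta\norm{v}$, i.e.\ $\rho$ replaced by $\kappa$ throughout --- which is what the paper's proof actually does --- not the statement's ``replace $\delta$ by $\kappa\delta$,'' which would retain the $\rho$-geometric prefactors. These genuinely differ (for $\kappa=1/2$, $\rho=0.9$ the density prefactors are $1$ versus $4.5$), so your parenthetical caveat identifies a real mismatch between the proposition as stated and what the comparison argument proves; the stated form would require the stronger hypothesis you name, namely that $B$ retains the decay rate $\rho$ on the memory term while scaling only the increment by $\kappa$. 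The only substantive shortfall on your side is that you leave the density bound unresolved rather than drawing the conclusion your own counterexample forces; everything else is correct and coincides with the paper's method.
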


\begin{proof}[Full proof]
(A) Write $\Delta_n=\sum_{t=0}^{n-1}\rho^{n-t} d_t$. Let $A_n=\{t\le n-1:\ d_t\succeq \delta v\}$ and assume $\liminf_{n\to\infty}|A_n|/n\ge \underline{D}$. Then
\[
\norm{\Delta_n}\ \ge\ \sum_{t\in A_n}\rho^{n-t}\,\delta\,\norm{v}
\ \ge\ \delta\norm{v}\ \rho\ \frac{|A_n|}{n}\ \sum_{j=1}^{n}\rho^{j}
\ \xrightarrow[n\to\infty]{}\ \frac{\rho}{1-\rho}\underline{D}\,\delta\norm{v}.
\]
If gaps are bounded by $G$, each block of length $G\!+\!1$ contains an event, hence
\(\norm{\Delta_n}\ge \delta\norm{v}\sum_{k\ge 0}\rho^{(G+1)k+1}=\frac{\rho^{G+1}}{1-\rho^{G+1}}\delta\norm{v}\).
(B) For nonlinear $B$, define the comparison recursion
\(\tilde\Delta_{n+1}=\kappa(\tilde\Delta_n + d_n)\) with $\tilde\Delta_0=0$; monotonicity and the incremental bound yield $\Delta_n\succeq \tilde\Delta_n$, reducing to the linear case with $\rho$ replaced by $\kappa$.
\end{proof}

\begin{example}[Finite-dimensional witness]\label{ex:finite}
Let $H=\R^2$, $E=\mathrm{id}$, $B(z)=\rho z$ with $\rho=0.8$, and constant stimulus $s_n=(1,0.5)$. Let $F$ remove the second coordinate: $P_F(s)=(1,0)$. Then
\(
\Phi_{\mathrm{intact}}^{(\infty)}(0)=\frac{\rho}{1-\rho}(1,0.5)=(4,2)
\)
and
\(
\Phi_{\mathrm{circ}}^{(\infty)}(0)=\frac{\rho}{1-\rho}(1,0)=(4,0),
\)
so the gap equals $(0,2)$ and $U(x)=x_1+x_2$ yields a gap of $2$.
\end{example}

\subsection*{Parameter mapping to physiology (calibration schema)}
\begin{table}[H]\centering\footnotesize
\begin{tabular}{@{}P{0.18\textwidth}P{0.28\textwidth}P{0.18\textwidth}P{0.28\textwidth}@{}}
\toprule
\textbf{Symbol} & \textbf{Physiological correlate} & \textbf{Units/Range} & \textbf{Notes}\\\midrule
$E$ & Neural embedding gain from cutaneous receptors to cortical signal space & linear map & Increases with receptor density; $E\circ P_F$ removes $F$-channels\\
$\rho$ & Decay/retention in update $B(z)=\rho z$ & $(0,1)$ & Larger $\rho$ $\Rightarrow$ higher steady-state amplification $\rho/(1-\rho)$\\
$\kappa$ & Incremental responsiveness (nonlinear $B$) & $(0,1]$ & Lower bound: $B(x+w)-B(x)\succeq \kappa w$\\
$D$ & Event density of $F$-stimulation & $[0,1]$ & Appears in lower Banach density bounds\\
$U$ & Order-reflecting utility & arbitrary & E.g., integral against a positive weight or coordinate sum\\\bottomrule
\end{tabular}
\caption{Model parameters and physiological mapping (schema).}
\end{table}

\section{Where it holds / Where not (Global Summary)}
\label{sec:scope}
\begin{table}[H]\centering\footnotesize
\renewcommand{\arraystretch}{1.2}
\begin{tabular}{@{}P{0.23\textwidth}P{0.42\textwidth}P{0.29\textwidth}@{}}
\toprule
\textbf{Result (Location)} & \textbf{Holds when} & \textbf{Not claimed when} \\\midrule
Determinization (Lemma~\ref{lem:determinize}) & Complete lattice; monotone lift to $2^\X$ or $\mathcal{P}(\X)$; Tarski applies \citep{tarski1955} & Global lifting disallowed \\
Compositionality (Thm.~\ref{thm:compositionality}, Prop.~\ref{prop:measurable-composition}) & Union-based lifts; Borel maps/Markov kernels; standard Borel spaces & Nonmonotone lifts; measurability failures \\
Logical contraction (Thm.~\ref{thm:logical-contraction}) & Completeness; event-indexed factors with product $0$ & Rates not claimed \\
Ordinal stabilization (Thm.~\ref{thm:ordinal}) & Normal/diagonalizable; commuting spectral projections & Nonnormal/noncommuting (counterexamples) \\
Riesz product (Thm.~\ref{thm:riesz}) & Commuting Riesz projections \citep{kato1995,dunford_schwartz1958} & Noncommuting projections; nonclosed intersections \\
\(\Phi\)-packing (Lemma~\ref{lem:packing}) & Monotone Scott-continuous maps on complete lattice & Discontinuous updates; incompleteness \\
Depletion (Thm.~\ref{thm:depletion}) & Positive $E$; monotone $B$; event-indexed contraction; $F$ stimulated & $E$ not $F$-detectable; no $F$-stimulation \\
Quantified gaps (Lem.~\ref{lem:quant-gap}, Prop.~\ref{prop:nonperiodic}) & Linear $B$ or $\kappa$-comparison; periodic/dense events & Arbitrary nonlinear $B$ without comparison bound \\\bottomrule
\end{tabular}
\caption{Consolidated ``Where it holds / Where not'' summary.}
\end{table}

\section{Related Foundations and Links to Recent Work}
Event-indexed contraction and anchored implications unify operator fixed points with quantum-logical constraints \citep{alpay_logical_contraction,alpay_event_indexed_fp}. Ordinal-indexed transforms yield convergence by $\omega$ to spectral/ergodic projections \citep{alpay_transfinite_iteration}. Recursive semantic anchoring furnishes another \(\Phi\)-packing instance in formal linguistics \citep{kilictas_iso639}. Determinization and lattice fixed points are anchored in \citet{rabin_scott1959,hopcroft_ullman1979,tarski1955}; spectral projections in \citet{kato1995,dunford_schwartz1958}.

\section{Conclusion}
We presented a single, rigorous \(\Phi\)-framework that: determinizes possibility dynamics, stabilizes transfinite operator iterations into projections, and quantitatively explains how structural tissue removal provably reduces attainable fixed points in coupled physical--mental systems. We added compositionality of lifts, complete proofs in \S3, a consolidated scope table, and a finite-dimensional witness, sharpening both originality and probative clarity.

\bigskip
\noindent\textbf{Memorable handles (with cross-references).}
\begin{itemize}[leftmargin=1.75em]
\item \textbf{Alpay Ordinal Stabilization} — \hyperref[thm:ordinal]{Theorem~\ref{thm:ordinal}}.
\item \textbf{Alpay Product-of-Riesz Projections} — \hyperref[thm:riesz]{Theorem~\ref{thm:riesz}}.
\item \textbf{Flip--Flop Determinization} — \hyperref[lem:determinize]{Lemma~\ref{lem:determinize}}.
\item \textbf{Compositionality of Lifts} — \hyperref[thm:compositionality]{Theorem~\ref{thm:compositionality}}, \hyperref[prop:measurable-composition]{Proposition~\ref{prop:measurable-composition}}.
\item \textbf{\(\Phi\)-Packing Product Closure} — \hyperref[lem:packing]{Lemma~\ref{lem:packing}}.
\item \textbf{Alpay \(\Phi\)-Projection Depletion Theorem} — \hyperref[thm:depletion]{Theorem~\ref{thm:depletion}}.
\end{itemize}

\appendix
\section*{Appendix A: Reproducible code for Example~\ref{ex:finite} and a stochastic variant}
\begin{lstlisting}[language=Python,caption={Deterministic and stochastic variants for Example 5.8.}]
import numpy as np

# Parameters
rho = 0.8
E = np.eye(2)  # embedding
s = np.array([1.0, 0.5])
PF = np.array([[1,0],[0,0]])  # removes second coordinate

# Iterates
def iterate(B, u, n=1000):
    x = np.zeros_like(u)
    for _ in range(n):
        x = B(x + u)
    return x

B = lambda z: rho * z

x_intact = iterate(B, E.dot(s))
x_circ   = iterate(B, E.dot(PF).dot(s))
print("Deterministic fixed points:", x_intact, x_circ)

# Stochastic variant: random stimulation on F with probability p
rng = np.random.default_rng(0)
p = 0.3
def stream(n=20000):
    for _ in range(n):
        stim_on_F = rng.random() < p
        yield np.array([1.0, 0.5 if stim_on_F else 0.0])

x = np.zeros(2); y = np.zeros(2)
for s_t in stream():
    x = B(x + E.dot(s_t))          # intact
    y = B(y + E.dot(PF).dot(s_t))  # projected
print("Empirical end states (stochastic):", x, y)
\end{lstlisting}


\end{document}